\theoremstyle{plain}
\newtheorem{thm}{{Theorem}}[section]
\newtheorem{cor}[thm]{Corollary}
\newtheorem{pro}[thm]{Proposition}
\author{Isaac Owino Okoth \\ Department of Pure and Applied Mathematics \\
Maseno University, Kenya \\ ookoth@maseno.ac.ke \\\\
Stephan Wagner\footnote{Supported by the Knut and Alice Wallenberg Foundation, grant KAW 2017.0112.} \\ Department of Mathematics \\ Uppsala Universitet, Sweden \\
and \\  Department of Mathematical Sciences \\ Stellenbosch University, South Africa \\ stephan.wagner@math.uu.se, swagner@sun.ac.za}
\title{Refined enumeration of $k$-plane trees and $k$-noncrossing trees}
\date{}
\begin{document}
\maketitle 
\begin{abstract}
A $k$-\emph{plane tree} is a plane tree whose vertices are assigned labels between $1$ and $k$ in such a way that the sum of the labels along any edge is no greater than $k+1$. These trees are known to be related to $(k+1)$-ary trees, and they are counted by a generalised version of the Catalan numbers. We prove a surprisingly simple refined counting formula, where we count trees with a prescribed number of labels of each kind. Several corollaries are derived from this formula, and an analogous theorem is proven for $k$-\emph{noncrossing trees}, a similarly defined family of labelled noncrossing trees that are related to $(2k+1)$-ary trees.
\end{abstract}

\section{Introduction and Preliminaries}
\label{sec:Intro}

Many combinatorial objects that are counted by the Catalan numbers have $k$-ary analogues. Heubach, Li and Mansour list several such examples in \cite{Heubach2008Staircase}, among them $k$-ary trees, different families of lattice paths, nonintersecting arc sequences, and certain types of Young diagrams.

\medskip

The family of $k$-plane trees, which was first considered in \cite{GuProdingerWagner2010Bijections}, is another example that leads to $k$-ary analogues of the Catalan numbers. It is the family of all labelled plane trees (rooted trees where the order of branches matters) with vertex labels in the set $[k] = \{1,2,\ldots,k\}$ and the restriction that the sum of the labels along any edge is never greater than $k+1$. Figure~\ref{fig:4plane} shows an example of a $4$-plane tree. 

\medskip

Note that $1$-plane trees are simply plane trees where every vertex is labelled $1$, which are counted by the Catalan numbers. Moreover, we note that a plane tree with vertex labels in $\{1,2\}$ is a $2$-plane tree if and only if the vertices labelled $2$ form an independent set. Therefore, the total number of $2$-plane trees is the same as the total number of independent sets in all plane trees, which was determined in~\cite{Klazar1997Twelve}.

\medskip

The number of $k$-plane trees with $n$ vertices is the generalised Catalan number
$$\frac{1}{n-1} \binom{(k+1)(n-1)}{n} = \frac{k}{n} \binom{(k+1)(n-1)}{n-1},$$
and there is a similar formula for the number of $k$-plane trees with $n$ vertices whose root is labelled $h$:
$$\frac{k+1-h}{kn-h+1} \binom{(k+1)n-h-1}{n-1}.$$
In particular, we obtain the number of $(k+1)$-ary trees (trees where every internal vertex has precisely $k+1$ children) with $n-1$ internal vertices when $h = k$. An explicit bijection is provided in~\cite{GuProdingerWagner2010Bijections}.

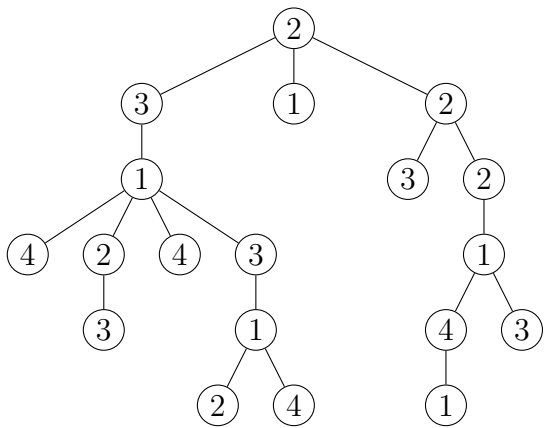
\begin{figure}[htbp]
\centering
\begin{tikzpicture}
\node[draw,circle,inner sep=2pt] (u1) at (0,0) {2};
\node[draw,circle,inner sep=2pt] (u2) at (-2,-1) {3};
\node[draw,circle,inner sep=2pt] (u3) at (0,-1) {1};
\node[draw,circle,inner sep=2pt] (u4) at (2,-1) {2};
\node[draw,circle,inner sep=2pt] (u5) at (-2,-2) {1};
\node[draw,circle,inner sep=2pt] (u6) at (1.5,-2) {3};
\node[draw,circle,inner sep=2pt] (u7) at (2.5,-2) {2};
\node[draw,circle,inner sep=2pt] (u8) at (-3.5,-3) {4};
\node[draw,circle,inner sep=2pt] (u9) at (-2.5,-3) {2};
\node[draw,circle,inner sep=2pt] (u10) at (-1.5,-3) {4};
\node[draw,circle,inner sep=2pt] (u11) at (-0.5,-3) {3};
\node[draw,circle,inner sep=2pt] (u12) at (2.5,-3) {1};
\node[draw,circle,inner sep=2pt] (u13) at (-2.5,-4) {3};
\node[draw,circle,inner sep=2pt] (u14) at (-0.5,-4) {1};
\node[draw,circle,inner sep=2pt] (u15) at (2,-4) {4};
\node[draw,circle,inner sep=2pt] (u16) at (3,-4) {3};
\node[draw,circle,inner sep=2pt] (u17) at (-1,-5) {2};
\node[draw,circle,inner sep=2pt] (u18) at (0,-5) {4};
\node[draw,circle,inner sep=2pt] (u19) at (2,-5) {1};

\draw (u1)--(u2);
\draw (u1)--(u3);
\draw (u1)--(u4);
\draw (u2)--(u5);
\draw (u4)--(u6);
\draw (u4)--(u7);
\draw (u5)--(u8);
\draw (u5)--(u9);
\draw (u5)--(u10);
\draw (u5)--(u11);
\draw (u7)--(u12);
\draw (u9)--(u13);
\draw (u11)--(u14);
\draw (u12)--(u15);
\draw (u12)--(u16);
\draw (u14)--(u17);
\draw (u14)--(u18);
\draw (u15)--(u19);
\end{tikzpicture}
\caption{An example of a $4$-plane tree.}\label{fig:4plane}
\end{figure}

\medskip

The aim of this paper is to provide refined counting formulas for $k$-plane trees based on the number of occurrences of each label. Perhaps surprisingly, there is an explicit product formula for the number of $k$-plane trees with prescribed multiplicities of all labels. The main theorem reads as follows:

\begin{thm}\label{thm:main1}
Let $n > 1$. The number of $k$-plane trees with root label $h$, $\ell_i$ vertices labelled $i$ ($i \in [k]$) and $n = \ell_1+\ell_2 + \cdots + \ell_k$ vertices in total is given by
\begin{multline*}
\frac{\ell_h}{n(2n-1)} \prod_{r=1}^{\lceil k/2 \rceil} \binom{2n-1-\sum_{j=1}^{r-1} \ell_j - \sum_{j=k+2-r}^k \ell_j}{\ell_r} \\
\prod_{r=1}^{h-1} \binom{\sum_{j=1}^{r} \ell_j + \sum_{j=k+1-r}^k \ell_j - 1}{\ell_{k+1-r}} \prod_{r=h}^{\lfloor k/2 \rfloor} \binom{\sum_{j=1}^{r} \ell_j + \sum_{j=k+1-r}^k \ell_j}{\ell_{k+1-r}}
\end{multline*}
if $h \leq \lceil k/2 \rceil$, and by
\begin{multline*}
\frac{\ell_h}{(n-1)(2n-1)} \prod_{r=1}^{k+1-h} \binom{2n-1-\sum_{j=1}^{r-1} \ell_j - \sum_{j=k+2-r}^k \ell_j}{\ell_r} \\
\prod_{r=k+2-h}^{\lceil k/2 \rceil} \binom{2n-2-\sum_{j=1}^{r-1} \ell_j - \sum_{j=k+2-r}^k \ell_j}{\ell_r}
\prod_{r=1}^{\lfloor k/2 \rfloor} \binom{\sum_{j=1}^{r} \ell_j + \sum_{j=k+1-r}^k \ell_j - 1}{\ell_{k+1-r}}
\end{multline*}
otherwise.
The total number of $k$-plane trees with $\ell_i$ vertices labelled $i$ ($i \in [k]$) and $n = \ell_1+\ell_2 + \cdots + \ell_k$ vertices in total is 
$$\frac{1}{n-1} 
\prod_{r=1}^{\lceil k/2 \rceil} \binom{2n-2-\sum_{j=1}^{r-1} \ell_j - \sum_{j=k+2-r}^k \ell_j}{\ell_r}
\prod_{r=1}^{\lfloor k/2 \rfloor} \binom{\sum_{j=1}^{r} \ell_j + \sum_{j=k+1-r}^k \ell_j - 1}{\ell_{k+1-r}}.$$
\end{thm}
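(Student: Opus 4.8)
The plan is to derive the total count from the two root-label formulas, which are the content of the preceding parts of the theorem and which I may assume. Since every $k$-plane tree has a unique root label $h\in[k]$, the total count is simply $\sum_{h=1}^{k}N_h$, where $N_h$ is the number of such trees with root label $h$. Hence the statement is equivalent to a purely algebraic identity, and the task is to add the two cases of the root-label formula and simplify. Throughout I abbreviate
$$\alpha_r=2n-2-\sum_{j=1}^{r-1}\ell_j-\sum_{j=k+2-r}^{k}\ell_j,\qquad \beta_r=\sum_{j=1}^{r}\ell_j+\sum_{j=k+1-r}^{k}\ell_j,$$
and write $m=\lceil k/2\rceil$, $m'=\lfloor k/2\rfloor$. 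The relations $\alpha_r+1=2n-1-\beta_{r-1}$ (with $\beta_0=0$) and $\beta_r-\ell_{k+1-r}=\beta_{r-1}+\ell_r$ link the two families of sums and drive every later cancellation, so I would record them first.

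The first real step is to pull out the common ``all shifted up'' factor
$$B=\prod_{r=1}^{m}\binom{\alpha_r+1}{\ell_r}\prod_{r=1}^{m'}\binom{\beta_r}{\ell_{k+1-r}}$$
and to convert each shifted binomial into a rational multiple of its unshifted companion by the absorption identity, setting $p_r=\frac{\alpha_r+1-\ell_r}{\alpha_r+1}$ so that $\binom{\alpha_r}{\ell_r}=p_r\binom{\alpha_r+1}{\ell_r}$, and $q_r=\frac{\beta_r-\ell_{k+1-r}}{\beta_r}$ so that $\binom{\beta_r-1}{\ell_{k+1-r}}=q_r\binom{\beta_r}{\ell_{k+1-r}}$. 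Reading off which factors are shifted in each case of the root-label formula (substituting $s=k+1-h$ in the second case), the identity $\sum_h N_h=N_{\text{total}}$ becomes, after dividing by $B$,
$$\frac{1}{n(2n-1)}\sum_{h=1}^{m}\ell_h\prod_{r=1}^{h-1}q_r+\frac{1}{(n-1)(2n-1)}\Big(\prod_{r=1}^{m'}q_r\Big)\sum_{s=1}^{m'}\ell_{k+1-s}\prod_{r=s+1}^{m}p_r=\frac{1}{n-1}\prod_{r=1}^{m}p_r\prod_{r=1}^{m'}q_r.$$

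The crux is that both inner sums telescope. From $q_r=\frac{\beta_{r-1}+\ell_r}{\beta_r}$ one gets $\ell_h\prod_{r=1}^{h-1}q_r=\beta_h\prod_{r=1}^{h}q_r-\beta_{h-1}\prod_{r=1}^{h-1}q_r$, so the first sum collapses to $\beta_m\prod_{r=1}^{m}q_r$; and writing $\gamma_r=\alpha_r+1=2n-1-\beta_{r-1}$, the relation $\gamma_r-\ell_r=\gamma_{r+1}+\ell_{k+1-r}$ gives $\ell_{k+1-s}\prod_{r=s+1}^{m}p_r=\gamma_s\prod_{r=s}^{m}p_r-\gamma_{s+1}\prod_{r=s+1}^{m}p_r$, so the second sum collapses to $\gamma_1\prod_{r=1}^{m}p_r-\gamma_{m'+1}\prod_{r=m'+1}^{m}p_r$. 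Substituting these, the factor $\prod_{r=1}^{m'}q_r$ comes out, and because $\gamma_1=2n-1$ the term carrying $\prod_{r=1}^{m}p_r$ matches the right-hand side exactly and cancels. What remains is a single scalar identity: it reduces to $n\,\gamma_{m+1}=(n-1)\beta_m$ when $k$ is even and to $\beta_m q_m/n=(\gamma_m-\ell_m)/(n-1)$ when $k$ is odd, both of which follow from the direct evaluations $\beta_m=n$, $\gamma_{m+1}=n-1$ (even $k$) and $\beta_m q_m=\beta_{m-1}+\ell_m=n$, $\gamma_m-\ell_m=n-1$ (odd $k$) at the central index.

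I expect the main obstacle to be twofold: first, spotting the two telescoping identities above, after which everything else is routine; and second, handling the parity of $k$ correctly, since for even $k$ the central index $r=m$ contributes matching $p$- and $q$-factors whereas for odd $k$ the top index $r=m$ contributes only a $p$-factor, which is precisely why the final scalar identity takes two different shapes. The absorption steps are identities of rational functions in $\ell_1,\dots,\ell_k$, valid whenever no $\beta_r$ or $\gamma_r$ vanishes and extending to the remaining tuples by polynomiality, so no genuine division problem arises. As independent checks I would verify $k=1,2,3$ by hand and confirm that summing the final formula over all compositions $\ell_1+\cdots+\ell_k=n$ recovers the known total $\frac{1}{n-1}\binom{(k+1)(n-1)}{n}$. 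A conceptual, casework-free alternative would instead apply multivariate Lagrange inversion to the system $T_h=x_h\big(1-\sum_{i=1}^{k+1-h}T_i\big)^{-1}$ (with $x_i$ marking label $i$) to extract $[x_1^{\ell_1}\cdots x_k^{\ell_k}]\sum_{h}T_h$, the Jacobian being essentially triangular because each factor depends only on a partial sum of the $T_i$.
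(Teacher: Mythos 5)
Your proposal does not prove the theorem as stated. The two root-label formulas are not ``preceding parts'' that you may assume: they are two of the three assertions of Theorem~\ref{thm:main1} itself, and they carry its main content. What you actually establish is that the third formula is the sum over $h\in[k]$ of the first two --- and that part of your argument is correct: I verified your relations $\alpha_r+1=2n-1-\beta_{r-1}$ and $\beta_r-\ell_{k+1-r}=\beta_{r-1}+\ell_r$, both telescopings (the first sum collapses to $\beta_m\prod_{r=1}^m q_r$, the second to $\gamma_1\prod_{r=1}^m p_r-\gamma_{m'+1}\prod_{r=m'+1}^m p_r$ with $\gamma_1=2n-1$), and the two boundary evaluations $\beta_m=n$, $\gamma_{m+1}=n-1$ (even $k$) and $\beta_m q_m=n$, $\gamma_m-\ell_m=n-1$ (odd $k$). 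This is a nice consistency derivation, and it is even a different route for that one part than the paper takes: the paper does not sum over root labels but computes the total directly from $P_1+\cdots+P_k=1-zF_{k,1}(A)/A$ (equation~\eqref{eq:total_gf}) and one further application of the Lagrange--B\"urmann formula. But the hard core of the theorem --- the explicit product formulas for fixed root label, which in the paper require the functional system~\eqref{eq:mainequation}, the substitution $P_1=x_1A/F_{k,1}(A)$, Proposition~\ref{pro:formulas_for_Pi}, Corollary~\ref{cor:identity_for_A}, and a delicate coefficient extraction in the order $x_1,x_k,x_2,x_{k-1},\ldots$ --- is left entirely unproven.

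Your closing one-sentence fallback does not close this gap. Applying multivariate Lagrange inversion to the system $T_h=x_h\bigl(1-\sum_{i=1}^{k+1-h}T_i\bigr)^{-1}$ is exactly the naive approach the paper avoids, and your justification that ``the Jacobian is essentially triangular'' is unsupported: equation $h$ depends on the partial sum $T_1+\cdots+T_{k+1-h}$, so the dependency pattern is anti-triangular (equation $h$ involves the variables $T_1,\ldots,T_{k+1-h}$, including $T_h$ itself whenever $h\leq k+1-h$), and the Good-type determinant one obtains does not visibly factor into the stated products. Producing that factorisation is precisely the difficulty that the paper's substitution, together with the auxiliary polynomials $F_{k,i}$ and $G_{k,i}$ and their symmetries~\eqref{eq:sym} and~\eqref{eq:midpoint}, is designed to overcome by reducing everything to a single-variable Lagrange--B\"urmann application. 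To have a complete proof you must supply an argument for the two root-label formulas, at which point your telescoping gives an independent (and arguably cleaner) derivation of the total count.
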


Let us remark here that empty products are always considered to be $1$, and empty sums are considered to be $0$. To illustrate the result in a special case, let us give the formula for the total number of $4$-plane trees with $\ell_1,\ell_2,\ell_3,\ell_4$ vertices labelled $1,2,3,4$ respectively and $n = \ell_1 + \ell_2 + \ell_3 + \ell_4$ vertices in total:
\begin{multline*}
\frac{1}{n-1} \binom{2n-2}{\ell_1} \binom{2n-2-\ell_1-\ell_4}{\ell_2} \binom{\ell_1 + \ell_2 + \ell_3 + \ell_4 - 1}{\ell_3} \binom{\ell_1 + \ell_4 - 1}{\ell_4} \\
= \frac{1}{n-1} \binom{2n-2}{\ell_1} \binom{2n-2-\ell_1-\ell_4}{\ell_2} \binom{n-1}{\ell_3} \binom{\ell_1 + \ell_4 - 1}{\ell_4}.
\end{multline*}

Let us also mention again that $2$-plane trees are precisely $\{1,2\}$-labelled plane trees where the vertices labelled $2$ form an independent set. Therefore, for $k=2$, we obtain formulas due to Kirschenhofer, Prodinger and Tichy \cite{Kirschenhofer1986Fibonacci} for the total number of independent sets of a given size (containing the root, not containing the root, and total) in plane trees with $n$ vertices as special cases.

\medskip

Theorem~\ref{thm:main1} will be proven in Section~\ref{sec:plane} by first establishing a system of functional equations, which can be solved explicitly by means of a suitable substitution. The formula finally follows by an application of the Lagrange-B\"urmann formula. We will derive a number of corollaries from the formula in Theorem~\ref{thm:main1}, in particular on the average number of occurrences of a specific label. 

\medskip

The family of $k$-plane trees can also be bijectively related to lattice paths with up-steps of the form $(1,1)$ and down-steps of the form $(1,-k)$, see \cite{GuProdingerWagner2010Bijections}. In \cite{Heuberger2022Enumeration}, such paths are enumerated by the $y$-coordinates of the down-steps modulo $k$. Interestingly, the number of paths with exactly $a_i$ down-steps at level $i$ modulo $k$ for every $i$ turns out to be given by a similar, albeit somewhat different, product formula as those in Theorem~\ref{thm:main1}.

\medskip

In Section~\ref{sec:noncrossing}, we consider a similar family of trees called $k$-noncrossing trees: recall that a noncrossing tree is a tree whose vertices $v_1,v_2,\ldots,v_n$ can be arranged as points on a circle (in this order) with the edges represented by line segments between these points that do not intersect at interior points. In analogy to $k$-plane trees, one defines $k$-noncrossing trees as noncrossing trees whose vertices are labelled with labels in $[k]$ in such a way that the labels of two adjacent vertices $v_i, v_j$ with $i < j$ cannot add up to a sum greater than $k+1$ if the path from the root $v_1$ to $v_j$ contains $v_i$ (this includes the case that $i = 1$). Figure~\ref{fig:3noncr} shows an example of a $3$-noncrossing tree. Note that it contains two edges between vertices labelled $2$ and $3$ respectively that would not be allowed in a $k$-plane tree, but are allowed here because the path from the root moves from the vertex with higher index to the vertex with lower index.

\medskip

The special case $k = 2$ was considered in \cite{YanLiu2009Noncrossing}, where a bijection between $2$-noncrossing trees with a root labelled $2$ and $5$-ary trees was constructed. The more general case was studied in \cite{Pang2010kNoncrossing}, see also \cite{Okoth2022Enumeration}.

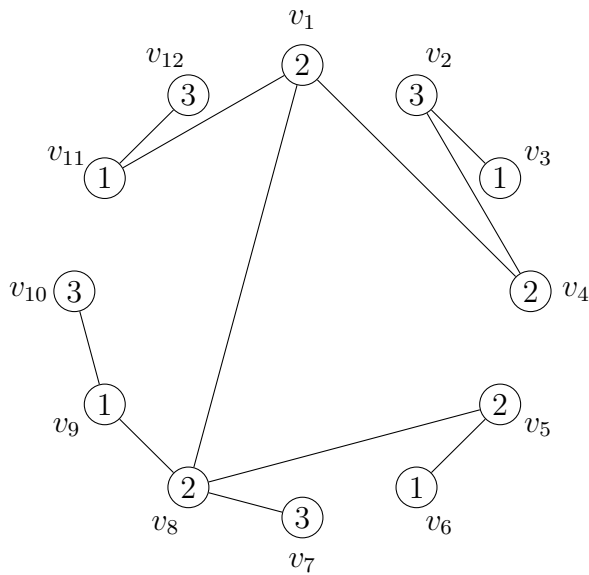
\begin{figure}[htbp]
\centering
\begin{tikzpicture}
\node[draw,circle,inner sep=2pt] (u1) at (0,3) {2};
\node[draw,circle,inner sep=2pt] (u2) at (1.5,2.6) {3};
\node[draw,circle,inner sep=2pt] (u3) at (2.6,1.5) {1};
\node[draw,circle,inner sep=2pt] (u4) at (3,0) {2};
\node[draw,circle,inner sep=2pt] (u5) at (2.6,-1.5) {2};
\node[draw,circle,inner sep=2pt] (u6) at (1.5,-2.6) {1};
\node[draw,circle,inner sep=2pt] (u7) at (0,-3) {3};
\node[draw,circle,inner sep=2pt] (u8) at (-1.5,-2.6) {2};
\node[draw,circle,inner sep=2pt] (u9) at (-2.6,-1.5) {1};
\node[draw,circle,inner sep=2pt] (u10) at (-3,0) {3};
\node[draw,circle,inner sep=2pt] (u11) at (-2.6,1.5) {1};
\node[draw,circle,inner sep=2pt] (u12) at (-1.5,2.6) {3};

\draw (u1)--(u4);
\draw (u1)--(u8);
\draw (u1)--(u11);
\draw (u2)--(u3);
\draw (u2)--(u4);
\draw (u5)--(u6);
\draw (u5)--(u8);
\draw (u7)--(u8);
\draw (u8)--(u9);
\draw (u9)--(u10);
\draw (u11)--(u12);

\node at (0,3.6) {$v_1$};
\node at (1.8,3.1) {$v_2$};
\node at (3.1,1.8) {$v_3$};
\node at (3.6,0) {$v_4$};
\node at (3.1,-1.8) {$v_5$};
\node at (1.8,-3.1) {$v_6$};
\node at (0,-3.6) {$v_7$};
\node at (-1.8,-3.1) {$v_8$};
\node at (-3.1,-1.8) {$v_9$};
\node at (-3.6,0) {$v_{10}$};
\node at (-3.1,1.8) {$v_{11}$};
\node at (-1.8,3.1) {$v_{12}$};
\end{tikzpicture}
\caption{An example of a $3$-noncrossing tree.}\label{fig:3noncr}
\end{figure}

\medskip

In analogy to Theorem~\ref{thm:main1}, we will also be counting $k$-noncrossing trees by the number of vertices of each label. The resulting formulas are quite similar and again surprisingly explicit.

\begin{thm}\label{thm:main2}
Let $n > 1$. The number of $k$-noncrossing trees with root label $h$, $\ell_i$ vertices labelled $i$ ($i \in [k]$) and $n = \ell_1+\ell_2 + \cdots + \ell_k$ vertices in total is given by
\begin{align*}
&\frac{2\ell_h}{(2n-1)(4n-3)} \prod_{r=1}^{\lceil k/2 \rceil} \binom{3n-2-\sum_{j=1}^{r-1} \ell_j - \sum_{j=k+2-r}^k \ell_j}{\ell_r} \\
&\qquad \prod_{r=1}^{h-1} \binom{n - 2 + \sum_{j=1}^{r} \ell_j + \sum_{j=k+1-r}^k \ell_j}{\ell_{k+1-r}} \prod_{r=h}^{\lfloor k/2 \rfloor} \binom{n - 1 + \sum_{j=1}^{r} \ell_j + \sum_{j=k+1-r}^k \ell_j}{\ell_{k+1-r}} \\
&\quad - \frac{\ell_h }{(4n-3)(3n-2-\sum_{j=1}^{h-1} \ell_j - \sum_{j=k+2-h}^k \ell_j)} \prod_{r=1}^{\lfloor k/2 \rfloor} \binom{n - 1 + \sum_{j=1}^{r} \ell_j + \sum_{j=k+1-r}^k \ell_j}{\ell_{k+1-r}} \\
&\qquad \prod_{r=1}^{h-1} \binom{3n-3-\sum_{j=1}^{r-1} \ell_j - \sum_{j=k+2-r}^k \ell_j}{\ell_r} 
\prod_{r=h}^{\lceil k/2 \rceil} \binom{3n-2-\sum_{j=1}^{r-1} \ell_j - \sum_{j=k+2-r}^k \ell_j}{\ell_r}
\end{align*}
if $h \leq \lceil k/2 \rceil$, and by
\begin{align*}
&\frac{\ell_h}{(n-1)(4n-3)} \prod_{r=1}^{\lfloor k/2 \rfloor} \binom{n - 2 + \sum_{j=1}^{r} \ell_j + \sum_{j=k+1-r}^k \ell_j}{\ell_{k+1-r}} \\
&\qquad \prod_{r=1}^{k+1-h} \binom{3n-2-\sum_{j=1}^{r-1} \ell_j - \sum_{j=k+2-r}^k \ell_j}{\ell_r} 
\prod_{r=k+2-h}^{\lceil k/2 \rceil} \binom{3n-3-\sum_{j=1}^{r-1} \ell_j - \sum_{j=k+2-r}^k \ell_j}{\ell_r} \\
&\quad - \frac{\ell_h }{(4n-3)(n - 1 + \sum_{j=1}^{k+1-h} \ell_j + \sum_{j=h}^k \ell_j)} 
\prod_{r=1}^{\lceil k/2 \rceil} \binom{3n-3-\sum_{j=1}^{r-1} \ell_j - \sum_{j=k+2-r}^k \ell_j}{\ell_r} \\
&\qquad \prod_{r=1}^{k+1-h} \binom{n - 1 + \sum_{j=1}^{r} \ell_j + \sum_{j=k+1-r}^k \ell_j}{\ell_{k+1-r}} \prod_{r=k+2-h}^{\lfloor k/2 \rfloor} \binom{n - 2 + \sum_{j=1}^{r} \ell_j + \sum_{j=k+1-r}^k \ell_j}{\ell_{k+1-r}}
\end{align*}
otherwise.
The total number of $k$-noncrossing trees with $\ell_i$ vertices labelled $i$ ($i \in [k]$) and $n = \ell_1+\ell_2 + \cdots + \ell_k$ vertices in total is 
\begin{multline*}
\frac{1}{n-1} \prod_{r=1}^{\lceil k/2 \rceil} \binom{3n-3-\sum_{j=1}^{r-1} \ell_j - \sum_{j=k+2-r}^k \ell_j}{\ell_r}
\prod_{r=1}^{\lfloor k/2 \rfloor} \binom{n - 2 + \sum_{j=1}^{r} \ell_j + \sum_{j=k+1-r}^k \ell_j}{\ell_{k+1-r}} \\
- \frac{1}{2n-1} \prod_{r=1}^{\lceil k/2 \rceil} \binom{3n-2-\sum_{j=1}^{r-1} \ell_j - \sum_{j=k+2-r}^k \ell_j}{\ell_r}
\prod_{r=1}^{\lfloor k/2 \rfloor} \binom{n - 1 + \sum_{j=1}^{r} \ell_j + \sum_{j=k+1-r}^k \ell_j}{\ell_{k+1-r}}.
\end{multline*}
\end{thm}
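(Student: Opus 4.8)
The plan is to mirror the argument used for Theorem~\ref{thm:main1}: first translate the recursive structure of $k$-noncrossing trees into a system of functional equations for suitable generating functions, then solve that system by an explicit substitution, and finally read off the coefficients by multivariate Lagrange--B\"urmann inversion.

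The combinatorial input is a decomposition that exploits the asymmetry built into the definition of $k$-noncrossing trees. Fix the root $v_1$ and consider any vertex $v_c$ together with the subtree hanging below it (the vertices whose path to $v_1$ runs through $v_c$); these occupy a contiguous arc. The children of $v_c$ split into those of smaller index (``left children'') and those of larger index (``right children''). For a left child the constraining vertex of the edge lies deeper in the tree, so the label-sum condition does \emph{not} apply and the child may carry any label in $[k]$; for a right child the condition does apply, so a child of a vertex labelled $i$ may only carry a label in $\{1,\dots,k+1-i\}$. Let $G_i = G_i(x_1,\dots,x_k)$ be the generating function for such subtrees rooted at a vertex labelled $i$ (with $x_j$ marking vertices labelled $j$), and write $S_m = \sum_{j=1}^m G_j$ and $U_m = 1 - S_m$. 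Since the left part is an arbitrary sequence of subtrees and the right part a sequence of label-restricted subtrees, the decomposition yields
\[
G_i = \frac{x_i}{U_k\,U_{k+1-i}}, \qquad 1 \le i \le k .
\]
The root $v_1$ has no left children, so the generating function for whole trees with root label $h$ is $F_h = x_h/U_{k+1-h} = U_k G_h$, and the generating function for all $k$-noncrossing trees regardless of root label is $\sum_{h=1}^k F_h = U_k S_k = S_k - S_k^2$.

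Next I would solve the system. Writing $G_i = U_{i-1} - U_i$ and clearing denominators turns the relations into the symmetric substitution
\[
x_m = U_k\,U_{k+1-m}\,(U_{m-1} - U_m), \qquad 1 \le m \le k, \quad U_0 = 1,
\]
which is exactly the substitution used for $k$-plane trees, now decorated by the extra factor $U_k$ that encodes the noncrossing (``left part'') structure; it linearises the pairing of the index $m$ with $k+1-m$ that is visible in the statement. The coefficients are then extracted by Good's multivariate Lagrange--B\"urmann formula applied to the system $G_i = x_i \Phi_i(\vec G)$ with $\Phi_i = (U_k U_{k+1-i})^{-1}$. Because $\partial(\log \Phi_i)/\partial G_j = U_k^{-1} + [\,j \le k+1-i\,]\,U_{k+1-i}^{-1}$, the Jacobian matrix $\delta_{ij} - G_i\,\partial(\log\Phi_i)/\partial G_j$ is a rank-one perturbation of a staircase matrix, and I expect its determinant to collapse to a short explicit expression in $U_1,\dots,U_k$. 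Feeding this into the inversion formula reduces each coefficient $[x_1^{\ell_1}\cdots x_k^{\ell_k}]$ of $G_h$, $S_k G_h$, $S_k$ and $S_k^2$ to a coefficient, in the $G$-variables, of a product of powers of the $U_m = 1 - S_m$; since each such factor expands by the binomial theorem and the partial sums telescope under the pairing $r \leftrightarrow k+1-r$, this produces precisely the advertised products of binomial coefficients, with the case distinction $h \le \lceil k/2\rceil$ versus $h > \lceil k/2\rceil$ recording on which side of the pairing the index $k+1-h$ falls. The two summands in each formula then come from the two pieces of $F_h = G_h - S_k G_h$ (respectively of the total $S_k - S_k^2$).

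The main obstacle will be the determinant evaluation together with the ensuing multivariate coefficient extraction: one must track the coupled partial sums $S_m$ and their pairing carefully enough that the residue computation factors as a product of binomials, and then carry out the bookkeeping for the two cases of $h$. A secondary but genuine difficulty is that the Lagrange--B\"urmann output does not arrive in the exact shape stated; as already happens when $k=1$, where $[x^n](S_1 - S_1^2)$ is produced as $\tfrac1n\binom{3n-2}{n-1} - \tfrac2n\binom{3n-3}{n-2}$, a short binomial identity is needed to regroup it into the stated form with the prefactors $\tfrac{1}{n-1}$, $\tfrac{1}{2n-1}$, $\tfrac{2}{(2n-1)(4n-3)}$ and $\tfrac{1}{4n-3}$.
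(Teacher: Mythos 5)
Your combinatorial setup is correct, but it is in fact the paper's own decomposition in disguise rather than a new one: your left/right-children analysis is exactly the butterfly decomposition, your $G_i$ are the butterfly generating functions (in the paper's notation $G_i = N_iN_1/(x_1z)$, and $N_r = x_rz/U_{k+1-r}$), so your system $G_i = x_i/(U_kU_{k+1-i})$, $F_h = U_kG_h$, $\sum_h F_h = S_k - S_k^2$ is equivalent to the paper's equation~\eqref{eq:mainequation_nc}; indeed $S_k - S_k^2 = U_k - U_k^2$ with $U_k = \sqrt{zF_{k,1}(B)/B}$ is precisely the paper's expression for $N_1 + \cdots + N_k$. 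From this common starting point the routes diverge, and your proposal stops exactly where the real work begins.

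The genuine gap is the passage from Good's formula to the stated products, which you only assert (``I expect its determinant to collapse'', ``this produces precisely the advertised products''). The determinant does not collapse in the way you need: already for $k=2$ one finds $\det\bigl(\delta_{ij} - G_i\,\partial\log\Phi_i/\partial G_j\bigr) = (3U_1^2 - 4U_1 + 2U_2)/(U_1U_2)$, and the numerator, though short, does not factor into the linear forms that would let the variable-by-variable extraction of $\prod_i \Phi_i^{\ell_i} = U_k^{-n}\prod_i U_{k+1-i}^{-\ell_i}$ terminate in a single product of binomials per summand; instead one obtains a sum with a growing number of binomial products, and collapsing it to the theorem's exact two-term shape --- with the prefactors $\tfrac{2\ell_h}{(2n-1)(4n-3)}$, $\tfrac{1}{n-1}$ versus $\tfrac{1}{2n-1}$, and in particular the $\ell$-dependent denominator $3n-2-\sum_{j=1}^{h-1}\ell_j - \sum_{j=k+2-h}^{k}\ell_j$ --- is essentially the whole theorem and is nowhere addressed. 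The paper avoids multivariate inversion altogether: it keeps the redundant variable $z$, substitutes $N_1 = x_1\sqrt{zB/F_{k,1}(B)}$ as in~\eqref{eq:subst_nc} (the square root is what later produces the $\tfrac{1}{2n-1}$-type terms via half-integer exponents), proves inductively in Proposition~\ref{pro:formulas_for_Ni} that every $N_h$ is a \emph{monomial in the linear forms} $F_{k,i}(B)^{\pm 1}$, $G_{k,i}(B)^{\pm 1}$, derives the single algebraic equation for $B$ in Corollary~\ref{cor:identity_for_B_nc}, and then applies one-dimensional Lagrange--B\"urmann with coefficient extraction in the order $x_1,x_k,x_2,x_{k-1},\ldots$, so that each summand emerges directly as one product of binomials; the two summands arise from splitting the logarithmic-derivative factor so that one piece is $\tfrac{\partial}{\partial x_h}$ of a power (producing the factor $\ell_h$) and the other is the leftover $F_{k,1}^{-1}$ term --- not, as you conjecture, from the two pieces $G_h$ and $S_kG_h$. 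Your plan contains no analogue of this linear-forms factorization, which is the key idea of the proof; without it, or a worked-out determinant evaluation together with the regrouping identities you defer, the proposal is an outline of the difficulties rather than a proof.
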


This theorem will be proven in Section~\ref{sec:noncrossing}, using similar techniques as in the proof of Theorem~\ref{thm:main1}, and several corollaries will follow as well. Let us again illustrate the formula in the special case $k=4$, where the formula for the total number of $4$-noncrossing trees with $\ell_1,\ell_2,\ell_3,\ell_4$ vertices labeled $1,2,3,4$ respectively and $n$ vertices in total is
\begin{multline*}
\frac{1}{n-1} \binom{3n-3}{\ell_1} \binom{3n-3-\ell_1 -\ell_4}{\ell_2}
\binom{2n - 2}{\ell_3} \binom{n - 2 + \ell_1 + \ell_4}{\ell_4} \\
- \frac{1}{2n-1} \binom{3n-2}{\ell_1} \binom{3n-2-\ell_1 - \ell_4}{\ell_2}
\binom{2n - 1}{\ell_3} \binom{n - 1 + \ell_1 + \ell_4}{\ell_4}.
\end{multline*}

\section{Plane trees}
\label{sec:plane}

The key to proving Theorem~\ref{thm:main1} is a system of equations for the multivariate generating functions of $k$-plane trees with a given root label. We fix $k$ and let $\mathcal{P}_r$ denote the set of $k$-plane trees whose root is labelled $r$. Moreover, we let $h_i(T)$ be the number of vertices labelled $i$ in a tree $T$, and $|T|$ the total number of vertices of $T$. Define
$$P_r = P_r(z,x_1,x_2,\ldots,x_k) = \sum_{T \in \mathcal{P}_r} z^{|T|} \prod_{i=1}^k x_i^{h_i(T)}.$$
Now note that a tree $T \in \mathcal{P}_r$ can be decomposed into the root (labelled $r$) and a (possibly empty) sequence of branches that are again $k$-plane trees, with root labels in $[k+1-r] = \{1,2,\ldots,k+1-r\}$. Thus we have
\begin{equation}\label{eq:mainequation}
P_r = x_r z \sum_{j \geq 0} (P_1 + P_2 + \cdots + P_{k+1-r})^j = \frac{x_r z}{1-P_1-P_2 - \cdots - P_{k+1-r}}
\end{equation}
for all $r \in [k]$. 

\medskip

Now let us set
$$F_{k,i}(t) = 1 + \Big( \sum_{j=i}^{\lceil k/2\rceil} x_j - \sum_{j=i}^{\lfloor k/2\rfloor} x_{k+1-j} \Big) t$$
and
$$G_{k,i}(t) = 1 + \Big( \sum_{j=i+1}^{\lceil k/2\rceil} x_j - \sum_{j=i}^{\lfloor k/2\rfloor} x_{k+1-j} \Big) t$$
for $1 \leq i \leq \lceil k/2\rceil$. These expressions satisfy the recursions
\begin{equation}\label{eq:rec1}
F_{k,i+1}(t) = G_{k,i}(t) + x_{k+1-i}t \qquad \text{and} \qquad G_{k,i}(t) = F_{k,i}(t) - x_i t,
\end{equation}
as well as
\begin{equation}\label{eq:rec2}
F_{k,i+1}(t) = F_{k,i}(t) + (x_{k+1-i} - x_i)t \qquad \text{and} \qquad G_{k,i+1}(t) =  G_{k,i}(t) + (x_{k+1-i} - x_{i+1})t.
\end{equation}
We can use these recursions to continue the definition of $F_{k,i}(t)$ and $G_{k,i}(t)$ to greater values of $i$: generally, we set
$$F_{k,i}(t) = F_{k,1}(t) + \sum_{j=1}^{i-1} (x_{k+1-j} - x_j)t$$
and
$$G_{k,i}(t) = G_{k,1}(t) + \sum_{j=1}^{i-1} (x_{k+1-j} - x_{j+1})t.$$
Both~\eqref{eq:rec1} and~\eqref{eq:rec2} remain satisfied. Since
$$\sum_{j=i}^{k+1-i} (x_{k+1-j} - x_j) = 0 \qquad \text{and} \qquad \sum_{j=i}^{k-i} (x_{k+1-j} - x_{j+1}) = 0,$$
we see that the following symmetry properties hold:
\begin{equation}\label{eq:sym}
F_{k,i}(t) = F_{k,k+2-i}(t)\qquad \text{and} \qquad G_{k,i}(t) = G_{k,k+1-i}(t).
\end{equation}
Moreover, it is important to observe that
\begin{equation}\label{eq:midpoint}
F_{k,k/2+1} = 1\quad (k \text{ even})\qquad \text{and} \qquad  G_{k,(k+1)/2} = 1\quad (k \text{ odd}).
\end{equation}

The key to the proof of Theorem~\ref{thm:main1} is the substitution
\begin{equation}\label{eq:subst}
P_1 = \frac{x_1 A}{F_{k,1}(A)}
\end{equation}
for a suitable power series $A$. One can easily solve the equation for $A$ to show that this power series actually exists (and that it is unique).

\medskip

As it turns out, we can express $P_1,P_2,\ldots,P_{k-1}$ in terms of $A$ as well and also set up a functional equation for $A$ that is amenable to an application of the Lagrange inversion formula.

\begin{pro}\label{pro:formulas_for_Pi}
The power series $P_1,P_2,\ldots,P_k$ can be expressed in terms of $A$ and the variables $x_1,x_2,\ldots,x_k$ and  $z$ in the following way: for $1 \leq h \leq k$,
\begin{align*}
P_h &= x_h A \prod_{i=1}^{h} F_{k,i}(A)^{-1} \prod_{i=1}^{h-1} G_{k,i}(A), \\
P_{k+1-h} &= x_{k+1-h} z \prod_{i=1}^{h} F_{k,i}(A) \prod_{i=1}^h G_{k,i}(A)^{-1}.
\end{align*}
\end{pro}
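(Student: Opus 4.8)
The plan is to verify that the claimed expressions solve the system \eqref{eq:mainequation} and then invoke uniqueness of its power series solution. Write $B_m = 1 - (P_1 + \cdots + P_m)$ for $0 \le m \le k$, so that \eqref{eq:mainequation} reads $P_r = x_r z / B_{k+1-r}$. The engine of the whole argument is that the first family of formulas telescopes: setting $P_i = x_i A \prod_{j=1}^{i} F_{k,j}(A)^{-1}\prod_{j=1}^{i-1}G_{k,j}(A)$ and using $F_{k,i}(A)-G_{k,i}(A) = x_i A$ (a restatement of the second identity in \eqref{eq:rec1}), one gets
\[
P_i = \prod_{j=1}^{i-1}\frac{G_{k,j}(A)}{F_{k,j}(A)} - \prod_{j=1}^{i}\frac{G_{k,j}(A)}{F_{k,j}(A)},
\]
so that the partial sums collapse to
\[
B_m = \prod_{j=1}^{m} \frac{G_{k,j}(A)}{F_{k,j}(A)} \qquad (0 \le m \le k). \tag{$\star$}
\]
Conversely, $(\star)$ returns the first formula via $P_h = B_{h-1}-B_h$, and it immediately yields the second family, since $P_{k+1-h} = x_{k+1-h}z/B_h = x_{k+1-h} z \prod_{j=1}^h F_{k,j}(A) G_{k,j}(A)^{-1}$. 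Thus it suffices to exhibit a power series $A$ for which the $P_r$ defined by the first formula satisfy \eqref{eq:mainequation}.

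Substituting the first formula together with $(\star)$ into $P_r B_{k+1-r} = x_r z$ turns the $r$-th equation into
\[
A\,\frac{\prod_{i=1}^{r-1}G_{k,i}(A)\,\prod_{i=1}^{k+1-r}G_{k,i}(A)}{\prod_{i=1}^{r}F_{k,i}(A)\,\prod_{i=1}^{k+1-r}F_{k,i}(A)} = z.
\]
The crucial point is that the left-hand side is independent of $r$: replacing $r$ by $r+1$ multiplies it by $\tfrac{G_{k,r}(A)\,F_{k,k+1-r}(A)}{G_{k,k+1-r}(A)\,F_{k,r+1}(A)}$, which equals $1$ thanks to the symmetries \eqref{eq:sym}, namely $G_{k,k+1-r}=G_{k,r}$ and $F_{k,k+1-r}=F_{k,k+2-(k+1-r)}=F_{k,r+1}$. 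Hence all $k$ equations collapse to the single functional equation obtained at $r=1$,
\[
A = z\,\frac{F_{k,1}(A)\prod_{i=1}^{k}F_{k,i}(A)}{\prod_{i=1}^{k}G_{k,i}(A)}. \tag{FE}
\]

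To finish, I would let $A$ be the unique power series solving $(\mathrm{FE})$ (it exists and is unique because its right-hand side is $z\,(1+O(A))$), define the $P_r$ by the first formula, and observe from the computation above that they satisfy \eqref{eq:mainequation} and have the correct initial behaviour $P_r = x_r z + O(z^2)$. Since \eqref{eq:mainequation} has a unique power series solution — its coefficients are determined order by order in $z$ after expanding the geometric series — these $P_r$ are the genuine generating functions, and the second family then follows as explained. Finally, the case $h=1$ of the first formula is exactly $P_1 = x_1 A / F_{k,1}(A)$, so this $A$ coincides with the series introduced in the substitution \eqref{eq:subst}. I expect the only delicate part to be the bookkeeping in the symmetry cancellation above: tracking precisely which factors are gained and lost as $r$ increases, and checking the boundary indices — including the degenerate middle factors $F_{k,k/2+1}=1$ and $G_{k,(k+1)/2}=1$ from \eqref{eq:midpoint} — so that the reflected products line up correctly for both parities of $k$.
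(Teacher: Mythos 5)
Your proof is correct, and it takes a genuinely different route from the paper's. The paper argues by induction on $h$: the base case is the substitution \eqref{eq:subst} together with a direct computation of $P_k$, and the induction step eliminates between the equations \eqref{eq:mainequation} for $r=h$ and $r=h+1$ to produce $P_{k+1-h} = \frac{x_{h+1}z}{P_{h+1}} - \frac{x_h z}{P_h}$, from which both new formulas are extracted using the recursions \eqref{eq:rec1}; it thus works directly with the actual generating functions and needs no separate uniqueness argument. You instead verify the closed forms: your telescoping identity $1-P_1-\cdots-P_m = \prod_{j=1}^m G_{k,j}(A)/F_{k,j}(A)$, which rests on $F_{k,i}(A)-G_{k,i}(A)=x_iA$ (valid for all $i$ by the paper's extended definitions), collapses the whole system to the single functional equation $A = z\,F_{k,1}(A)\prod_{i=1}^{k}F_{k,i}(A)\prod_{i=1}^{k}G_{k,i}(A)^{-1}$, the $r$-independence being exactly the symmetry \eqref{eq:sym}; uniqueness of the power-series solution of \eqref{eq:mainequation} (coefficients determined order by order, since each $P_i$ has zero constant term) and of the $A$ in \eqref{eq:subst} (which the paper notes) then finish the job. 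Notably, your equation (FE) is precisely the intermediate identity $A = z\prod_{i=1}^{k+1}F_{k,i}(A)\prod_{i=1}^{k}G_{k,i}(A)^{-1}$ appearing in the paper's proof of Corollary~\ref{cor:identity_for_A}, so your argument proves the proposition and that corollary in one stroke, whereas the paper obtains the corollary afterwards by equating its two representations of $P_h$; your identity $(\star)$ also makes explicit a clean closed form for $1-P_1-\cdots-P_k$ that the paper only uses implicitly via \eqref{eq:total_gf}. What each approach buys: the paper's induction is constructive and discovers the formulas without guessing them; your verification is shorter once the formulas are conjectured, but must carry the (routine) existence/uniqueness bookkeeping, which you correctly flag. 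One small reassurance: the boundary-index worry you raise at the end is already taken care of, since \eqref{eq:sym} is stated for the extended definitions of $F_{k,i}$ and $G_{k,i}$, and no reduction to half-range products (where \eqref{eq:midpoint} enters) is needed for your (FE) — that reduction only matters for the final form of Corollary~\ref{cor:identity_for_A}.
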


\begin{proof}
We use induction on $h$. For $h = 1$, the first equation is exactly our substitution~\eqref{eq:subst}, while the second equation follows from~\eqref{eq:mainequation} for $r = k$ and an application of~\eqref{eq:rec1}:
$$P_k = \frac{x_k z}{1-P_1} = \frac{x_k z}{1 - \frac{x_1 A}{F_{k,1}(A)}} = \frac{x_k z F_{k,1}(A)}{F_{k,1}(A) - x_1 A} = \frac{x_k z F_{k,1}(A)}{G_{k,1}(A)}.$$
For the induction step, use~\eqref{eq:mainequation} with $r = h$ and $r = h+1$ respectively, which yields
\begin{align*}
1 - P_1 - P_2 - \cdots - P_{k+1-h} = \frac{x_h z}{P_h}, \\
1 - P_1 - P_2 - \cdots - P_{k-h} = \frac{x_{h+1} z}{P_{h+1}}.
\end{align*}
Now take the difference:
\begin{equation}\label{eq:after_elimination}
P_{k+1-h} = \frac{x_{h+1}z}{P_{h+1}} - \frac{x_hz}{P_h}.
\end{equation}
After some simple manipulations, this gives us
\begin{equation}\label{eq:induction_step}
P_{h+1} = \frac{x_{h+1}z}{P_{k+1-h} + \frac{x_h z}{P_h}}.
\end{equation}
Now it only remains to apply the induction hypothesis and simplify:
\begin{align*}
P_{h+1} &= \frac{x_{h+1}z}{x_{k+1-h} z \prod_{i=1}^h F_{k,i}(A) \prod_{i=1}^h G_{k,i}(A)^{-1} + \frac{z}{A} \prod_{i=1}^h F_{k,i}(A) \prod_{i=1}^{h-1} G_{k,i}(A)^{-1}} \\
&= \frac{x_{h+1}A}{x_{k+1-h}A + G_{k,h}(A)} \prod_{i=1}^h F_{k,i}(A)^{-1} \prod_{i=1}^h G_{k,i}(A) \\
&= \frac{x_{h+1}A}{F_{k,h+1}(A)} \prod_{i=1}^h F_{k,i}(A)^{-1} \prod_{i=1}^h G_{k,i}(A) \\
&= x_{h+1}A \prod_{i=1}^{h+1} F_{k,i}(A)^{-1} \prod_{i=1}^h G_{k,i}(A).
\end{align*}
Likewise, replacing $h$ by $k-h$ in~\eqref{eq:after_elimination} gives us
$$P_{h+1} = \frac{x_{k+1-h}z}{P_{k+1-h}} - \frac{x_{k-h}z}{P_{k-h}}.$$
Thus
$$P_{k-h} = \frac{x_{k-h}z}{\frac{x_{k+1-h}z}{P_{k+1-h}} - P_{h+1}}.$$
Now plug in~\eqref{eq:induction_step} and apply the induction hypothesis. Again, we obtain the desired formula after some further manipulations.
\end{proof}

\begin{cor}\label{cor:identity_for_A}
The power series $A$ satisfies the equation
$$A = z \prod_{i=1}^{\lceil k/2 \rceil} F_{k,i}(A)^2 \prod_{i=1}^{\lfloor k/2 \rfloor} G_{k,i}(A)^{-2}.$$
\end{cor}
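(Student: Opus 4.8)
The plan is to read Proposition~\ref{pro:formulas_for_Pi} as providing two expressions for one and the same series. Indeed, for any index $p \in [k]$ the first formula (taken at $h = p$) and the second formula (taken at $h = k+1-p$, so that its left-hand side $P_{k+1-h}$ is $P_p$) both describe $P_p$. Equating them and cancelling the common factor $x_p$ gives
$$A \prod_{i=1}^{p} F_{k,i}(A)^{-1} \prod_{i=1}^{p-1} G_{k,i}(A) = z \prod_{i=1}^{k+1-p} F_{k,i}(A) \prod_{i=1}^{k+1-p} G_{k,i}(A)^{-1},$$
which, after moving all $F$'s into the numerator and all $G$'s into the denominator, becomes a functional equation for $A$ valid for every $p$. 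The corollary will be the instance of this identity at the central index.

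Concretely, I would take $p = \lfloor k/2\rfloor + 1$, equivalently $k+1-p = \lceil k/2 \rceil$ and $p-1 = \lfloor k/2 \rfloor$. With this choice the resulting equation reads
$$A = z \prod_{i=1}^{\lfloor k/2\rfloor+1} F_{k,i}(A) \prod_{i=1}^{\lceil k/2\rceil} F_{k,i}(A) \prod_{i=1}^{\lfloor k/2\rfloor} G_{k,i}(A)^{-1} \prod_{i=1}^{\lceil k/2\rceil} G_{k,i}(A)^{-1},$$
and the two $F$-products (respectively the two $G$-products) are almost identical, differing only by a single central factor. The remaining task is to see that this stray factor is trivial, which is exactly what the midpoint relations~\eqref{eq:midpoint} supply.

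To verify the cancellation cleanly I would split according to the parity of $k$. For even $k$ one has $\lfloor k/2\rfloor = \lceil k/2\rceil = k/2$, so the $G$-products coincide and combine into $\prod_{i=1}^{k/2} G_{k,i}(A)^2$, while the $F$-products combine into $\prod_{i=1}^{k/2} F_{k,i}(A)^2$ times the stray factor $F_{k,k/2+1}(A)$, which equals $1$ by~\eqref{eq:midpoint}. For odd $k$ the roles are reversed: the $F$-products coincide and yield $\prod_{i=1}^{(k+1)/2} F_{k,i}(A)^2 = \prod_{i=1}^{\lceil k/2\rceil} F_{k,i}(A)^2$, whereas the $G$-products produce $\prod_{i=1}^{(k-1)/2} G_{k,i}(A)^2$ times $G_{k,(k+1)/2}(A) = 1$. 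In both cases the surviving expression is precisely $A = z \prod_{i=1}^{\lceil k/2 \rceil} F_{k,i}(A)^2 \prod_{i=1}^{\lfloor k/2 \rfloor} G_{k,i}(A)^{-2}$, as claimed.

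The only thing to watch is the index arithmetic, namely matching $k+1-p$ and $p-1$ to $\lceil k/2\rceil$ and $\lfloor k/2\rfloor$ through $\lceil k/2\rceil + \lfloor k/2\rfloor = k$, so that the correct single midpoint factor is isolated in each parity case; no analytic subtlety arises beyond this bookkeeping. I expect this reindexing, together with getting the parity split right, to be the main (albeit routine) obstacle. Alternatively, the symmetry relations~\eqref{eq:sym} could be invoked to pair the factors directly, but reducing the identity at the central index and then applying~\eqref{eq:midpoint} is the most direct route.
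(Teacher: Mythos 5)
Your proposal is correct and follows essentially the same route as the paper: both proofs equate the two representations of the same series $P_h$ supplied by Proposition~\ref{pro:formulas_for_Pi} (the paper even remarks that $h$ was arbitrary, and your $p = \lfloor k/2\rfloor + 1$ is one admissible choice). The only cosmetic difference is that the paper keeps $h$ general and re-indexes via the symmetry relations~\eqref{eq:sym} before invoking~\eqref{eq:midpoint}, whereas your central-index specialisation makes the two $F$-products and two $G$-products align over the same initial segment, so the midpoint relations~\eqref{eq:midpoint} alone suffice -- a slightly leaner bookkeeping of the same argument.
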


\begin{proof}
Replace $h$ by $k+1-h$ in the second equation of Proposition~\ref{pro:formulas_for_Pi} to obtain another representation for $P_h$:
$$P_h = x_h z \prod_{i=1}^{k+1-h} F_{k,i}(A) \prod_{i=1}^{k+1-h} G_{k,i}(A)^{-1}.$$
Equating the two expressions for $P_h$ yields
$$A = z \prod_{i=1}^h F_{k,i}(A) \prod_{i=1}^{k+1-h} F_{k,i}(A) \prod_{i=1}^{h-1} G_{k,i}(A)^{-1} \prod_{i=1}^{k+1-h} G_{k,i}(A)^{-1}$$
Now we apply the symmetry properties~\eqref{eq:sym} to the second and fourth product:
\begin{align*}
A &= z \prod_{i=1}^h F_{k,i}(A) \prod_{i=h+1}^{k+1} F_{k,i}(A) \prod_{i=1}^{h-1} G_{k,i}(A)^{-1} \prod_{i=h}^{k} G_{k,i}(A)^{-1} \\
&= z \prod_{i=1}^{k+1} F_{k,i}(A) \prod_{i=1}^{k} G_{k,i}(A)^{-1}.
\end{align*}
Applying the symmetry properties once again, and noting that $F_{k,k/2+1}$ can be left out if $k$ is even, while $G_{k,(k+1)/2}$ can be left out if $k$ is odd (by~\eqref{eq:midpoint}), we end up with
$$A = z \prod_{i=1}^{\lceil k/2 \rceil} F_{k,i}(A)^2 \prod_{i=1}^{\lfloor k/2 \rfloor} G_{k,i}(A)^{-2},$$
completing the proof.
Note that $h$ could have been chosen arbitrarily for this purpose.
\end{proof}

We can now proceed with the proof of our first main theorem.

\begin{proof}[Proof of Theorem~\ref{thm:main1}]
We are now ready to apply the Lagrange-B\"urmann formula \cite[Corollary 5.4.3]{Stanley2}, based on Proposition~\ref{pro:formulas_for_Pi} and Corollary~\ref{cor:identity_for_A}. Let us first recall this formula: if $A$ satisfies an implicit equation of the form $A = z \Phi(A)$, then
\begin{equation}\label{eq:lag-bur}
[z^n] f(A) = \frac{1}{n} [t^{n-1}] f'(t) \Phi(t)^n.
\end{equation}
Now suppose first that $h \leq \lceil k/2 \rceil$. In view of Proposition~\ref{pro:formulas_for_Pi} and Corollary~\ref{cor:identity_for_A}, we can apply~\eqref{eq:lag-bur} with
$$\Phi(t) = \prod_{i=1}^{\lceil k/2 \rceil} F_{k,i}(t)^2 \prod_{i=1}^{\lfloor k/2 \rfloor} G_{k,i}(t)^{-2}$$
and
$$f(t) = x_h t \prod_{i=1}^{h} F_{k,i}(t)^{-1} \prod_{i=1}^{h-1} G_{k,i}(t)$$
in order to compute the coefficients of $P_h$. The derivative $f'$ is determined by means of logarithmic differentiation. It is also important to observe that
$$\frac{F'_{k,i}(t)}{F_{k,i}(t)} = \frac{1}{t} \Big( 1 - \frac{1}{F_{k,i}(t)} \Big)\qquad \text{and} \qquad \frac{G'_{k,i}(t)}{G_{k,i}(t)} = \frac{1}{t} \Big( 1 - \frac{1}{G_{k,i}(t)} \Big),$$
which yields
\begin{align*}
f'(t) &= f(t) \Big( \frac1{t} - \sum_{i=1}^h \frac{F'_{k,i}(t)}{F_{k,i}(t)} + \sum_{i=1}^{h-1} \frac{G'_{k,i}(t)}{G_{k,i}(t)} \Big) \\
&= f(t) \Big( \frac1{t} - \frac1{t} \sum_{i=1}^h \Big( 1 - \frac{1}{F_{k,i}(t)} \Big) + \frac1{t} \sum_{i=1}^{h-1} \Big( 1 - \frac{1}{G_{k,i}(t)} \Big) \Big) \\
&= \frac{f(t)}{t} \Big( \sum_{i=1}^h \frac{1}{F_{k,i}(t)} - \sum_{i=1}^{h-1} \frac{1}{G_{k,i}(t)} \Big).
\end{align*}
So we have
\begin{align*}
[z^n x_1^{\ell_1} \cdots x_k^{\ell_k}] P_h &= \frac{1}{n} [t^{n-1} x_1^{\ell_1} \cdots x_k^{\ell_k}] f'(t) \Phi(t)^n \\
&= \frac{1}{n} [t^{n-1} x_1^{\ell_1} \cdots x_k^{\ell_k}] 
x_h \prod_{i=1}^{h} F_{k,i}(t)^{-1} \prod_{i=1}^{h-1} G_{k,i}(t) \Big( \sum_{i=1}^h \frac{1}{F_{k,i}(t)} - \sum_{i=1}^{h-1} \frac{1}{G_{k,i}(t)} \Big) \\
&\qquad \prod_{i=1}^{\lceil k/2 \rceil} F_{k,i}(t)^{2n} \prod_{i=1}^{\lfloor k/2 \rfloor} G_{k,i}(t)^{-2n} \\
&= \frac{1}{n} [t^{n-1} x_1^{\ell_1} \cdots x_h^{\ell_h-1} \cdots x_k^{\ell_k}] \Big( \prod_{i=h+1}^{\lceil k/2 \rceil} F_{k,i}(t) \prod_{i=h}^{\lfloor k/2 \rfloor} G_{k,i}(t)^{-1} \Big)^{2n} \\
&\qquad \Big( \prod_{i=1}^{h} F_{k,i}(t) \prod_{i=1}^{h-1} G_{k,i}(t)^{-1} \Big)^{2n-1} \Big( \sum_{i=1}^h \frac{1}{F_{k,i}(t)} - \sum_{i=1}^{h-1} \frac{1}{G_{k,i}(t)} \Big).
\end{align*}
At this point, we can drop the variable $t$ (equivalently, set $t=1$), since the coefficient of $t^{n-1} x_1^{\ell_1} \cdots x_h^{\ell_h-1} \cdots x_k^{\ell_k}$ is only nonzero if $\ell_1 + \cdots + \ell_k = n$. Thus we will also only write $F_{k,i}$ and $G_{k,i}$ instead of $F_{k,i}(1)$ and $G_{k,i}(1)$ respectively.

\medskip

Next we note that $F_{k,1},F_{k,2},\ldots,F_{k,h}$ and $G_{k,1},G_{k,2},\ldots,G_{k,h-1}$ are the only factors that contain the variable $x_h$. Moreover, using logarithmic differentiation once again, one finds that
$$
\frac{\partial}{\partial x_h} \Big( \prod_{i=1}^{h} F_{k,i} \prod_{i=1}^{h-1} G_{k,i}^{-1} \Big)^{2n-1}
= (2n-1) \Big( \prod_{i=1}^{h} F_{k,i} \prod_{i=1}^{h-1} G_{k,i}^{-1} \Big)^{2n-1}\Big( \sum_{i=1}^h \frac{1}{F_{k,i}} - \sum_{i=1}^{h-1} \frac{1}{G_{k,i}} \Big).
$$
For any power series $X$, the coefficient of $x_h^{\ell_h-1}$ in $\frac{\partial}{\partial x_h} X$ is precisely $\ell_h$ times the coefficient of $x_h^{\ell_h}$ in $X$. Thus we get
$$
[z^n x_1^{\ell_1} \cdots x_k^{\ell_k}] P_h
= \frac{\ell_h}{n(2n-1)} [x_1^{\ell_1} \cdots x_k^{\ell_k}]
\Big( \prod_{i=1}^{h} F_{k,i} \prod_{i=1}^{h-1} G_{k,i}^{-1} \Big)^{2n-1} \Big( \prod_{i=h+1}^{\lceil k/2 \rceil} F_{k,i} \prod_{i=h}^{\lfloor k/2 \rfloor} G_{k,i}^{-1} \Big)^{2n}.
$$
Now we finally start extracting coefficients. First of all, we observe that $x_1$ only occurs in the factor $F_{k,1}$. Moreover, we have
$$F_{k,1}^{2n-1} = (x_1 + G_{k,1})^{2n-1},$$
so the coefficient of $x_1^{\ell_1}$ is $\binom{2n-1}{\ell_1} G_{k,1}^{2n-1-\ell_1}$, giving us
\begin{multline*}
[z^n x_1^{\ell_1} \cdots x_k^{\ell_k}] P_h = \frac{\ell_h}{n(2n-1)} \binom{2n-1}{\ell_1} [x_2^{\ell_2} \cdots x_k^{\ell_k}]
G_{k,1}^{-\ell_1} \\
\Big( \prod_{i=2}^{h} F_{k,i} \prod_{i=2}^{h-1} G_{k,i}^{-1} \Big)^{2n-1} \Big( \prod_{i=h+1}^{\lceil k/2 \rceil} F_{k,i} \prod_{i=h}^{\lfloor k/2 \rfloor} G_{k,i}^{-1} \Big)^{2n}.
\end{multline*}
Among the remaining factors, $G_{k,1}$ is the only one that contains $x_k$, and we have
\begin{align*}
[x_k^{\ell_k}] G_{k,1}^{-\ell_1} &= [x_k^{\ell_k}] (F_{k,2} - x_k)^{-\ell_1} = [x_k^{\ell_k}] F_{k,2}^{-\ell_1} \Big( 1 - \frac{x_k}{F_{k,2}} \Big)^{-\ell_1} \\
&= F_{k,2}^{-\ell_1} \binom{-\ell_1}{\ell_k} \Big( - \frac{1}{F_{k,2}}\Big)^{\ell_k} = \binom{\ell_1+\ell_k-1}{\ell_k} F_{k,2}^{-\ell_1-\ell_k}.
\end{align*}
It follows that
\begin{multline*}
[z^n x_1^{\ell_1} \cdots x_k^{\ell_k}] P_h = \frac{\ell_h}{n(2n-1)} \binom{2n-1}{\ell_1} \binom{\ell_1+\ell_k-1}{\ell_k} [x_2^{\ell_2} \cdots x_{k-1}^{\ell_{k-1}}]
F_{k,2}^{2n-\ell_1-\ell_k-1} \\
\Big( \prod_{i=3}^{h} F_{k,i} \prod_{i=2}^{h-1} G_{k,i}^{-1} \Big)^{2n-1} \Big( \prod_{i=h+1}^{\lceil k/2 \rceil} F_{k,i} \prod_{i=h}^{\lfloor k/2 \rfloor} G_{k,i}^{-1} \Big)^{2n}.
\end{multline*}
We can now continue in this way, considering the variables $x_1,x_k,x_2,x_{k-1},x_3,\ldots$ in this order. At the end, we have precisely the first formula in Theorem~\ref{thm:main1}.

\medskip

The case that $h > \lceil k/2 \rceil$ is treated in a similar way: since
$$P_{h} = x_{h} z \prod_{i=1}^{k+1-h} F_{k,i}(A) \prod_{i=1}^{k+1-h} G_{k,i}(A)^{-1}$$
in this case by the second equation of Proposition~\ref{pro:formulas_for_Pi}, we apply the Lagrange-B\"urmann formula with
$$f(t) = x_h \prod_{i=1}^{k+1-h} F_{k,i}(t) \prod_{i=1}^{k+1-h} G_{k,i}(t)^{-1}$$
and the same function $\Phi$ as before. This yields
\begin{align*}
[z^n x_1^{\ell_1} \cdots x_k^{\ell_k}] P_h &= [z^{n-1} x_1^{\ell_1} \cdots x_k^{\ell_k}] f(A) \\
&= \frac{1}{(n-1)} [t^{n-2} x_1^{\ell_1} \cdots x_k^{\ell_k}] f'(t) \Phi(t)^n.
\end{align*}
The remaining steps are completely analogous to the first case.

\medskip

Finally, we consider the total number of $k$-plane trees, without taking the root label into account. Here, we first observe that the generating function is
$$P_1+P_2 + \cdots + P_k = 1 - \frac{x_1z}{P_1}$$
in view of~\eqref{eq:mainequation} (for $r = 1$). In terms of $A$, this becomes
\begin{equation}\label{eq:total_gf}
P_1+P_2 + \cdots + P_k = 1 - \frac{z F_{k,1}(A)}{A}
\end{equation}
by \eqref{eq:subst}. Thus
$$[z^n x_1^{\ell_1} \cdots x_k^{\ell_k}] (P_1+P_2+\cdots+P_k) = - [z^{n-1} x_1^{\ell_1} \cdots x_k^{\ell_k}] \frac{F_{k,1}(A)}{A}.$$
Now we apply the Lagrange-B\"urmann formula once again. Noting that
$$\frac{\partial}{\partial t} \frac{F_{k,1}(t)}{t} = - \frac{1}{t^2},$$
we obtain
\begin{align*}
[z^n x_1^{\ell_1} \cdots x_k^{\ell_k}] (P_1+P_2+\cdots+P_k) &= \frac{1}{n-1} [t^{n-2} x_1^{\ell_1} \cdots x_k^{\ell_k}] t^{-2} \Phi(t)^{n-1} \\
&= \frac{1}{n-1} [t^{n} x_1^{\ell_1} \cdots x_k^{\ell_k}] \Phi(t)^{n-1},
\end{align*}
with the same function $\Phi$ as before. Again, the remaining steps are completely analogous.
\end{proof}

We conclude the section with corollaries of Theorem~\ref{thm:main1} that follow by specialisation. First of all, the following formulas from \cite{GuProdingerWagner2010Bijections} follow easily  by ignoring the variables $x_1,x_2,\ldots,x_k$.

\begin{cor}\label{cor:total}
For every positive integer $n$, the total number of $k$-plane trees with $n$ vertices is
$$\frac{k}{n} \binom{(k+1)(n-1)}{n-1}.$$
The number of $k$-plane trees with $n$ vertices whose root is labelled $h$ is
$$\frac{k+1-h}{kn-h+1} \binom{(k+1)n-h-1}{n-1}.$$
\end{cor}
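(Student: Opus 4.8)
The plan is to specialise the generating function identities established above by setting $x_1 = x_2 = \cdots = x_k = 1$, which corresponds exactly to ignoring the label multiplicities and recording only the total number of vertices. The first thing I would do is record what happens to the auxiliary polynomials under this specialisation. Directly from their definitions, once all variables are set to $1$ one has $F_{k,i}(t) = 1 + (\lceil k/2\rceil - \lfloor k/2\rfloor)t$ and $G_{k,i}(t) = 1 + (\lceil k/2\rceil - \lfloor k/2\rfloor - 1)t$, independently of $i$ (and all the indices occurring in $\Phi$ and in Proposition~\ref{pro:formulas_for_Pi} lie in the original range $1 \le i \le \lceil k/2\rceil$, so no extended values are needed). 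Hence the two parities split cleanly: for even $k$ we get $F_{k,i}(t) = 1$ and $G_{k,i}(t) = 1 - t$, while for odd $k$ we get $F_{k,i}(t) = 1 + t$ and $G_{k,i}(t) = 1$.

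Next I would feed this into Corollary~\ref{cor:identity_for_A} and Proposition~\ref{pro:formulas_for_Pi}. The functional equation for $A$ collapses to $A = z(1-A)^{-k}$ when $k$ is even and $A = z(1+A)^{k+1}$ when $k$ is odd, so in the Lagrange--B\"urmann setting $A = z\Phi(A)$ we simply have $\Phi(t) = (1-t)^{-k}$ or $\Phi(t) = (1+t)^{k+1}$. A pleasant simplification is that both formulas for $P_h$ in Proposition~\ref{pro:formulas_for_Pi} (the case $h \le \lceil k/2\rceil$ and the case $h > \lceil k/2\rceil$) collapse to the \emph{same} expression after using $z = A(1-A)^k$ respectively $z = A(1+A)^{-(k+1)}$: namely $P_h = A(1-A)^{h-1}$ for even $k$ and $P_h = A(1+A)^{-h}$ for odd $k$, valid uniformly in $h$.

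With these closed forms in hand, the root-labelled count is just $[z^n]P_h$, obtained by applying~\eqref{eq:lag-bur} with $f(t) = t(1-t)^{h-1}$ (even $k$) or $f(t) = t(1+t)^{-h}$ (odd $k$). Differentiating and extracting the coefficient leaves a short binomial computation; for even $k$, for instance, one is led to $\binom{(k+1)n-h}{n-1} - h\binom{(k+1)n-h-1}{n-2}$, and a single application of Pascal's rule together with the ratio $\binom{N}{n-2}/\binom{N}{n-1}=(n-1)/(kn-h+1)$ (with $N = (k+1)n-h-1$) turns this into $\frac{k+1-h}{kn-h+1}\binom{(k+1)n-h-1}{n-1}$, as claimed. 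The odd case runs identically and yields the same formula. For the total count I would instead use the generating function~\eqref{eq:total_gf}, which specialises to $1 - (1-A)^k$ (even $k$) and $1 - (1+A)^{-k}$ (odd $k$); applying~\eqref{eq:lag-bur} to $f(t) = (1-t)^k$ or $f(t) = (1+t)^{-k}$ produces $\frac{k}{n}\binom{(k+1)(n-1)}{n-1}$ in both cases.

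I do not expect any serious obstacle here: the whole point is that the elaborate products of Theorem~\ref{thm:main1} degenerate to powers of $1 \pm t$, so Lagrange--B\"urmann applies almost mechanically. The only points requiring care are the bookkeeping with the negative-binomial coefficient identity $\binom{-a}{b} = (-1)^b\binom{a+b-1}{b}$ and the attendant signs, and checking that the even- and odd-$k$ computations really do agree on the final answer. A final remark is that Theorem~\ref{thm:main1} was stated for $n > 1$, so the boundary value $n = 1$ must be verified directly; this is immediate, since a one-vertex tree contributes $k$ to the total count and $1$ to each root-label count, matching both formulas.
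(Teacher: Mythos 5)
Your proposal is correct and takes essentially the same route as the paper's proof: setting $x_1=\cdots=x_k=1$ collapses $F_{k,i}$ and $G_{k,i}$ to $1$ and $1\pm t$ according to the parity of $k$, so that Corollary~\ref{cor:identity_for_A} becomes $A=z(1-A)^{-k}$ (resp.\ $A=z(1+A)^{k+1}$) and Proposition~\ref{pro:formulas_for_Pi} gives $P_h=A(1-A)^{h-1}$ (resp.\ $A(1+A)^{-h}$), after which the Lagrange--B\"urmann extraction and binomial simplification proceed exactly as in the paper (which writes out only the even case and treats the total via $1-zF_{k,1}(A)/A$ rather than your equivalent $1-(1-A)^{k}$). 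One tiny inaccuracy in a parenthetical remark --- representing $P_h$ for $h>\lceil k/2\rceil$ via the second formula of Proposition~\ref{pro:formulas_for_Pi} does involve indices beyond $\lceil k/2\rceil$ --- is harmless, since at $x_i=1$ the extended $F_{k,i}$ and $G_{k,i}$ take the same constant values; your explicit verification of the boundary case $n=1$ is a small welcome addition that the paper leaves implicit.
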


\begin{proof}
Since the number of labels of each kind is no longer relevant, we can set $x_1 = x_2 = \cdots = x_k = 1$. We get
$$F_{k,i}(t) = \begin{cases} 1 & k \text{ even,} \\ 1 + t & k \text{ odd,} \end{cases}$$
as well as
$$G_{k,i}(t) = \begin{cases} 1 - t & k \text{ even,} \\ 1 & k \text{ odd,} \end{cases}$$
for all values of $i$. Consider the case that $k$ is even, the other being similar. 
Corollary~\ref{cor:identity_for_A} gives us
$$A = z (1-A)^{-k}.$$
So by the Lagrange-B\"urmann formula, we have
\begin{align*}
[z^n] (P_1+P_2 + \cdots + P_k) &= [z^n] \Big( 1 - \frac{z F_{k,1}(A)}{A} \Big) = - [z^{n-1}] A^{-1} \\
&= \frac{1}{n-1} [t^{n-2}] t^{-2} (1-t)^{-k(n-1)} = \frac{1}{n-1} [t^{n}] (1-t)^{-k(n-1)} \\
&= \frac{1}{n-1} \binom{(k+1)(n-1)}{n} = \frac{k}{n} \binom{(k+1)(n-1)}{n-1}.
\end{align*}
This is exactly the first formula. Considering the coefficients of $P_h$, which count trees whose root is labelled $h$, we have
$$P_h = A (1-A)^{h-1}$$
by Proposition~\ref{pro:formulas_for_Pi}. Thus
\begin{align*}
[z^n] P_h & = \frac{1}{n} [t^{n-1}] \big((1-t)^{h-1} - t(h-1)(1-t)^{h-2}\big) (1-t)^{-k n}. \\
&= \frac{1}{n} [t^{n-1}] (1-ht) (1-t)^{-k n + h - 2} \\
&= \frac{1}{n} \Big( \binom{(k+1) n - h}{n-1} - h \binom{(k+1) n - h - 1}{n-2} \Big)  \\
&= \frac{1}{n} \binom{(k+1) n - h - 1}{n-1} \Big( \frac{(k+1) n - h}{k n - h + 1} - \frac{h(n-1)}{k n - h + 1} \Big) \\
&= \frac{k - h + 1}{k n - h + 1} \binom{(k+1) n - h - 1}{n-1}.
\end{align*}

\end{proof}

Next, we count $k$-plane trees by occurrences of a single label.

\begin{cor}\label{cor:single_label}
For every $n > 1$, the total number of $k$-plane trees with $n$ vertices of which $\ell$ are labelled $h$ is equal to
$$\frac{1}{n-1} \sum_{r=0}^{n-\ell} \binom{2(h-1)(n-1)+r-1}{r} \binom{2(n-1)-r}{\ell} \binom{(k+1-2h)(n-1)}{n-r-\ell}$$
if $h \leq \lceil k/2 \rceil$, and equal to
$$\frac{1}{n-1} \sum_{r=0}^{n-\ell} \binom{2(k+1-h)(n-1)}{r} \binom{r+\ell-1}{\ell} \binom{(2h-k-1)(n-1)-r-\ell}{n-r-\ell}$$
otherwise.
\end{cor}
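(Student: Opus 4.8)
The plan is to specialise the generating-function identity established during the proof of Theorem~\ref{thm:main1}. There it was shown that
\[
[z^n x_1^{\ell_1} \cdots x_k^{\ell_k}] (P_1 + \cdots + P_k) = \frac{1}{n-1} [t^n x_1^{\ell_1} \cdots x_k^{\ell_k}] \Phi(t)^{n-1},
\]
where $\Phi(t) = \prod_{i=1}^{\lceil k/2 \rceil} F_{k,i}(t)^2 \prod_{i=1}^{\lfloor k/2 \rfloor} G_{k,i}(t)^{-2}$. To count $k$-plane trees only by the number $\ell$ of vertices labelled $h$, one sums over all admissible values of the remaining $\ell_j$, which is the same as setting $x_j = 1$ for every $j \neq h$ while keeping $x_h = x$. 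Hence the number we want is
\[
\frac{1}{n-1} [t^n x^\ell] \, \Phi(t)^{n-1} \big|_{x_j = 1,\, j \neq h},
\]
and the task reduces to a single bivariate coefficient extraction.

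First I would evaluate the specialised $\Phi$. From the explicit formulas for $F_{k,i}$ and $G_{k,i}$ together with the constant values recorded in Corollary~\ref{cor:total}, one sees that for $h \leq \lceil k/2 \rceil$ the variable $x_h$ occurs with positive sign exactly in $F_{k,i}$ for $i \leq h$ and in $G_{k,i}$ for $i \leq h-1$. For odd $k$ this gives
\[
\Phi(t)^{n-1} \big|_{x_j = 1,\, j \neq h} = (1+xt)^{2h(n-1)} \, (1+(x-1)t)^{-2(h-1)(n-1)} \, (1+t)^{(k+1-2h)(n-1)},
\]
and for even $k$ the analogous product with bases $1+(x-1)t$, $1+(x-2)t$ and $1-t$. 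For $h > \lceil k/2 \rceil$ the variable $x_h$ occurs with negative sign in $F_{k,i}$ and $G_{k,i}$ for $i \leq k+1-h$, and one obtains for odd $k$
\[
(1+(2-x)t)^{2(k+1-h)(n-1)} \, (1+(1-x)t)^{-2(k+1-h)(n-1)} \, (1+t)^{(2h-k-1)(n-1)},
\]
and the corresponding product with bases $1+(1-x)t$, $1-xt$ and $1-t$ for even $k$.

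The crucial structural observation, valid in all four cases, is that the base $b$ carrying the negative exponent and the base $a$ carrying the positive exponent are related by $b = a - t$. When $h \leq \lceil k/2 \rceil$ the two exponents differ by $2(n-1)$, and I would factor
\[
a^{2h(n-1)} b^{-2(h-1)(n-1)} = a^{2(n-1)} \Big( 1 - \tfrac{t}{a} \Big)^{-2(h-1)(n-1)} = \sum_{r} \binom{2(h-1)(n-1)+r-1}{r} t^r a^{2(n-1)-r},
\]
producing the first binomial coefficient. Writing $a = 1 + xt$ (odd $k$) or $a = (1-t) + xt$ (even $k$) and extracting $[x^\ell]$ from $a^{2(n-1)-r}$ yields the middle binomial $\binom{2(n-1)-r}{\ell}$; any residual power of $(1-t)$ merges with the third factor, and extracting $[t^{n-r-\ell}]$ gives $\binom{(k+1-2h)(n-1)}{n-r-\ell}$. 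This is the first stated formula. When $h > \lceil k/2 \rceil$ the two exponents are equal, so the positive prefactor disappears and instead
\[
a^{B} b^{-B} = \Big( 1 + \tfrac{t}{b} \Big)^{B} = \sum_r \binom{2(k+1-h)(n-1)}{r} t^r b^{-r}, \qquad B = 2(k+1-h)(n-1),
\]
now with an ordinary binomial coefficient. Extracting $[x^\ell]$ from $b^{-r} = (1+(1-x)t)^{-r}$ (odd $k$) or $(1-xt)^{-r}$ (even $k$) gives $\binom{r+\ell-1}{\ell}$, and combining with the third factor before extracting $[t^{n-r-\ell}]$ produces $\binom{(2h-k-1)(n-1)-r-\ell}{n-r-\ell}$. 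This is the second stated formula.

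The computation is short once set up, so the main obstacle is organisational. One must correctly reduce $\Phi$ in each of the four regimes (the two parities of $k$ times the two ranges of $h$, with the boundary value $h = (k+1)/2$ for odd $k$ falling harmlessly into the first case because then $k+1-2h = 0$), recognise the uniform relation $b = a - t$ between the two $x$-carrying bases, and, most importantly, choose the correct way to expand the ratio $a/b$. For $h \leq \lceil k/2 \rceil$ one absorbs the residual into the prefactor $a^{2(n-1)}$ by expanding $(1 - t/a)^{-2(h-1)(n-1)}$ in powers of $t/a$, which yields a negative binomial coefficient, whereas for $h > \lceil k/2 \rceil$ one expands $(1 + t/b)^{B}$ in powers of $t/b$, which yields an ordinary binomial coefficient. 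The opposite choice leaves a residual power of the wrong base that fails to combine cleanly with the label-free factor, and the intended single sum degenerates into an unmanageable double sum. With the correct expansion in hand, every remaining coefficient extraction is a direct application of the (negative) binomial theorem, and the summation range $0 \leq r \leq n - \ell$ is forced by $n - r - \ell \geq 0$.
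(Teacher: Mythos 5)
Your proposal is correct and follows essentially the same route as the paper: specialising $x_j=1$ for $j\neq h$ in the identity $[z^n\cdots](P_1+\cdots+P_k)=\frac{1}{n-1}[t^n\cdots]\Phi(t)^{n-1}$, rewriting $1+(x_h-1)t=(1+x_ht)\bigl(1-\tfrac{t}{1+x_ht}\bigr)$, and extracting coefficients via the (negative) binomial theorem, exactly as in the paper's treatment of the case $k$ odd, $h\leq\lceil k/2\rceil$. Your write-up in fact goes a bit further than the paper by working out all four parity/range regimes (including the flipped expansion $(1+t/b)^{B}$ for $h>\lceil k/2\rceil$) that the paper dismisses as ``similar'', and these computations check out.
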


\begin{proof}
Let us give the proof in the case that $k$ is odd and $h \leq \lceil k/2 \rceil = \frac{k+1}{2}$, the other cases being similar. Since we are only interested in vertices labelled $h$, we set $x_i = 1$ for all $i$ except $h$. Then we get
$$F_{k,i}(t) = \begin{cases} 1+x_ht & i \leq h, \\ 1+t & i > h,\end{cases} \qquad \text{and} \qquad G_{k,i}(t)  = \begin{cases} 1 + (x_h-1)t & i < h, \\ 1 & i \geq h. \end{cases}$$
Now we have to determine
$$[z^n x_h^{\ell}] (P_1 + P_2 + \cdots + P_k).$$
Using~\eqref{eq:total_gf} and the Lagrange-B\"urmann formula once again, we find that this equals
\begin{multline*}
[z^n x_h^{\ell}] (P_1 + P_2 + \cdots + P_k) \\
= \frac{1}{n-1} [t^n x_h^{\ell}] (1+x_h t)^{2h(n-1)} (1+t)^{(k+1-2h)(n-1)} (1+(x_h-1)t)^{-2(h-1)(n-1)}.
\end{multline*}
Now we extract the coefficient as follows:
\begin{align*}
[z^n x_h^{\ell}] &(P_1 + P_2 + \cdots + P_k) \\
&= \frac{1}{n-1} [t^n x_h^{\ell}] \Big( 1 - \frac{t}{1+x_h t}\Big)^{-2(h-1)(n-1)} 
(1+x_h t)^{2(n-1)} (1+t)^{(k+1-2h)(n-1)} \\
&= \frac{1}{n-1} [t^n x_h^{\ell}]  \sum_{r \geq 0} \binom{2(h-1)(n-1)+r-1}{r} t^r (1+x_ht)^{2(n-1)-r} (1+t)^{(k+1-2h)(n-1)} \\
&= \frac{1}{n-1} \sum_{r \geq 0} \binom{2(h-1)(n-1)+r-1}{r} [t^{n-r} x_h^{\ell}] (1+x_ht)^{2(n-1)-r} (1+t)^{(k+1-2h)(n-1)} \\
&= \frac{1}{n-1} \sum_{r \geq 0} \binom{2(h-1)(n-1)+r-1}{r} \binom{2(n-1)-r}{\ell} [t^{n-r-\ell}] (1+t)^{(k+1-2h)(n-1)} \\
&= \frac{1}{n-1} \sum_{r=0}^{n-\ell} \binom{2(h-1)(n-1)+r-1}{r} \binom{2(n-1)-r}{\ell} \binom{(k+1-2h)(n-1)}{n-r-\ell}.
\end{align*}
\end{proof}

\begin{cor}\label{cor:average}
For every $n > 1$, the average number of vertices labelled $h$ in $k$-plane trees with $n$ vertices is
$$\frac{2(k+1-h)n}{k(k+1)}.$$
\end{cor}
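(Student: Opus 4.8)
The plan is to obtain the average as a quotient, where the denominator is the total number of $k$-plane trees with $n$ vertices, namely $\frac{k}{n}\binom{(k+1)(n-1)}{n-1}$ by Corollary~\ref{cor:total}, and the numerator is the total number of vertices labelled $h$ summed over all these trees. The standard device for the numerator is to differentiate the multivariate generating function with respect to $x_h$: writing $S = P_1 + \cdots + P_k$, applying $\frac{\partial}{\partial x_h}$ and then setting $x_1 = \cdots = x_k = 1$ replaces each monomial $\prod_i x_i^{h_i(T)}$ by $h_h(T)$, so the desired total is exactly $[z^n] \frac{\partial S}{\partial x_h}\big|_{x_1=\cdots=x_k=1}$.

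The key simplification is that the proof of Theorem~\ref{thm:main1} already established the identity $[z^n] S = \frac{1}{n-1}[t^n]\Phi(t)^{n-1}$ of power series in $x_1,\ldots,x_k$, with $\Phi(t) = \prod_{i=1}^{\lceil k/2\rceil} F_{k,i}(t)^2 \prod_{i=1}^{\lfloor k/2\rfloor} G_{k,i}(t)^{-2}$. Since differentiation in $x_h$ commutes with extraction of $[t^n]$, the total number of label-$h$ vertices equals $\frac{1}{n-1}[t^n]\big(\frac{\partial}{\partial x_h}\Phi(t)^{n-1}\big)\big|_{x_i=1}$. I would compute $\frac{\partial}{\partial x_h}\Phi^{n-1} = (n-1)\Phi^{n-1}\,\frac{\partial \log \Phi}{\partial x_h}$ by logarithmic differentiation, precisely in the spirit of the manipulations carried out in the body of the paper.

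The heart of the matter is $\frac{\partial \log \Phi}{\partial x_h}$, which requires reading off from the definitions which factors $F_{k,i}$ and $G_{k,i}$ actually involve $x_h$. For $h \le \lceil k/2\rceil$ one finds $\frac{\partial}{\partial x_h} F_{k,i} = t$ for $i \le h$ (and $0$ otherwise) and $\frac{\partial}{\partial x_h} G_{k,i} = t$ for $i \le h-1$, so that $\frac{\partial \log \Phi}{\partial x_h} = 2t\big(\sum_{i=1}^{h} F_{k,i}^{-1} - \sum_{i=1}^{h-1} G_{k,i}^{-1}\big)$; for $h > \lceil k/2\rceil$ the relevant indices are $i \le k+1-h$ and both contributions carry a minus sign. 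Setting $x_1 = \cdots = x_k = 1$ afterwards, where $F_{k,i}$ and $G_{k,i}$ collapse to $1$ and $1\pm t$ according to the parity of $k$ and $\Phi$ becomes $(1-t)^{-k}$ (even $k$) or $(1+t)^{k+1}$ (odd $k$), the bracketed sum simplifies to a single rational function of $t$, for instance $2t\,\frac{1-ht}{1-t}$ in the case $k$ even, $h \le \lceil k/2\rceil$.

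It then remains to extract $[t^n]$ from an expression of the shape $c\, t^a p(t)(1-t)^{-k(n-1)-1}$ (even $k$) or $c\, t^a p(t)(1+t)^{(k+1)(n-1)-1}$ (odd $k$), where $c$ is a constant, $a \in \{1,2\}$ and $p$ is linear; this is a single binomial coefficient or a difference of two. The main obstacle — essentially the only place demanding care — is the bookkeeping: there are four cases ($k$ even or odd, and $h \le \lceil k/2\rceil$ or $h > \lceil k/2\rceil$), and one must check that they all collapse to the same closed form, namely that the total number of label-$h$ vertices is $2\binom{(k+1)(n-1)}{n-1}\frac{k+1-h}{k+1}$. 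The simplifications rely on the identity $(k+1)(n-1)-(n-1) = k(n-1)$ together with $\binom{M-1}{n-2} = \frac{n-1}{M}\binom{M}{n-1}$ for $M = (k+1)(n-1)$. Dividing this total by $\frac{k}{n}\binom{(k+1)(n-1)}{n-1}$ then yields $\frac{2(k+1-h)n}{k(k+1)}$, as claimed; a useful sanity check is that summing over $h$ recovers $\sum_{h=1}^{k} \frac{2(k+1-h)n}{k(k+1)} = n$, the expected total vertex count.
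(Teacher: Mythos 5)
Your proposal is correct and takes essentially the same route as the paper: the paper's proof likewise computes the total number of label-$h$ vertices as $[z^n]\frac{\partial}{\partial x_h}(P_1+\cdots+P_k)\big|_{x_1=\cdots=x_k=1}$ via the identity $[z^n](P_1+\cdots+P_k)=\frac{1}{n-1}[t^n]\Phi(t)^{n-1}$, differentiates, extracts a binomial coefficient, and divides by the count from Corollary~\ref{cor:total}. The only cosmetic differences are that the paper sets $x_i=1$ for $i\neq h$ before differentiating and writes out just the case $k$ odd, $h\leq\lceil k/2\rceil$ (declaring the others similar), whereas you differentiate $\Phi^{n-1}$ logarithmically in general and check explicitly that all four parity/position cases collapse to the same total $\frac{2(k+1-h)}{k+1}\binom{(k+1)(n-1)}{n-1}$, which is indeed what the paper's expression $2\binom{(k+1)(n-1)-1}{n-1}-2(h-1)\binom{(k+1)(n-1)-1}{n-2}$ simplifies to.
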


\begin{proof}
As in the previous proof, we only consider the case that $k$ is odd and $h \leq \lceil k/2 \rceil = \frac{k+1}{2}$. Instead of extracting coefficients, we take the derivative with respect to $x_h$ and plug in $x_h=1$ in order to determine the total number of vertices labelled $h$ in all $k$-plane trees. All other variables $x_i$ are immediately taken to be $1$. This gives us
\begin{align*}
[z^n] &\frac{\partial}{\partial x_h} (P_1+\cdots + P_k) \Big|_{x_1=\cdots=x_k = 1} \\
&= \frac{1}{n-1} [t^n] \frac{\partial}{\partial x_h} (1+x_h t)^{2h(n-1)} (1+t)^{(k+1-2h)(n-1)} (1+(x_h-1)t)^{-2(h-1)(n-1)} \Big|_{x_h=1} \\
&= [t^n] 2t \big(1-(h-1)t \big) (1+t)^{(k+1)(n-1)-1} \\
&= 2[t^{n-1}] \big(1-(h-1)t \big) (1+t)^{(k+1)(n-1)-1} \\
&= 2 \binom{(k+1)(n-1)-1}{n-1} - 2(h-1) \binom{(k+1)(n-1)-1}{n-2}.
\end{align*}

Dividing by the total number of $k$-plane trees (as given in Corollary~\ref{cor:total}), we obtain the stated formula.
\end{proof}

It is also possible to derive formulas for the variance of the number of vertices labelled $h$, as well as covariances of two different label counts. However, the formulas are somewhat unwieldy. Moreover, one could also take the root label into account in Corollary~\ref{cor:single_label} and Corollary~\ref{cor:average}. Instead of stating the most general result (which would be rather lengthy), we illustrate this in the special case $k=3$.

\begin{cor}\label{variance_plane}
Let $n > 1$. Variances and covariances of the number of vertices labelled $1,2,3$ respectively in $3$-plane trees with $n$ vertices are given in the following table:
\begin{center}
\begin{tabular}{c|ccc}
& $1$ & $2$ & $3$ \\
\hline
$1$ & $\frac{n(3n-4)}{4(4n-5)}$ & $-\frac{n}{6}$ & $- \frac{n(n-2)}{12(4n-5)}$ \\
$2$ & $-\frac{n}{6}$ & $\frac{2n(4n-3)}{9(3n-2)}$ & $- \frac{n(7n-6)}{18(3n-2)}$ \\
$3$ & $- \frac{n(n-2)}{12(4n-5)}$ & $- \frac{n(7n-6)}{18(3n-2)}$ & $\frac{n(5n-4)(13n-18)}{36(3n-2)(4n-5)}$ \\
\end{tabular}
\end{center}
\end{cor}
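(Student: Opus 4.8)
The plan is to extract the mixed factorial moments of the label-count vector $(N_1,N_2,N_3)$ from the total generating function $P=P_1+P_2+P_3$ and convert them into variances and covariances. The key observation is that the proof of Theorem~\ref{thm:main1} in fact yields
$$[z^n](P_1+\cdots+P_k)=\frac{1}{n-1}\,[t^n]\,\Phi(t)^{n-1}$$
as an \emph{identity in the variables $x_1,\ldots,x_k$}: the Lagrange--B\"urmann step only uses $\partial_t\big(F_{k,1}(t)/t\big)=-t^{-2}$, which carries no dependence on the $x_i$ (recall $F_{k,1}$ is linear in $t$). Hence differentiation with respect to the $x_i$ commutes with extraction of $[z^n]$, and with $T_n$ the total number of $3$-plane trees on $n$ vertices one has
\begin{align*}
m_i &:= T_n\,\mathbb{E}[N_i]=\frac{1}{n-1}\,[t^n]\,\partial_{x_i}\Phi^{n-1}\big|_{x_1=x_2=x_3=1}, \\
M_{ij} &:= T_n\,\mathbb{E}[N_iN_j-\delta_{ij}N_i]=\frac{1}{n-1}\,[t^n]\,\partial_{x_i}\partial_{x_j}\Phi^{n-1}\big|_{x_1=x_2=x_3=1}.
\end{align*}

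For $k=3$ the building blocks specialise to $F_{3,1}(t)=1+(x_1+x_2-x_3)t$, $F_{3,2}(t)=1+x_2t$ and $G_{3,1}(t)=1+(x_2-x_3)t$, so that $\Phi=F_{3,1}^2F_{3,2}^2G_{3,1}^{-2}$. Next I would carry out the differentiations by logarithmic differentiation, using
$$\partial_{x_i}\partial_{x_j}\Phi^{n-1}=(n-1)\Phi^{n-1}\big((n-1)\,(\partial_{x_i}\log\Phi)(\partial_{x_j}\log\Phi)+\partial_{x_i}\partial_{x_j}\log\Phi\big).$$
Because each of $F_{3,1},F_{3,2},G_{3,1}$ is linear in the $x_i$, the functions $a_i:=\partial_{x_i}\log\Phi\big|_{x=1}$ and $b_{ij}:=\partial_{x_i}\partial_{x_j}\log\Phi\big|_{x=1}$ are elementary rational functions of $t$; evaluating at $x=1$ (where $F_{3,1}=F_{3,2}=1+t$ and $G_{3,1}=1$) gives, for instance, $a_1=\tfrac{2t}{1+t}$, $a_2=\tfrac{2t(1-t)}{1+t}$ and $a_3=\tfrac{2t^2}{1+t}$, together with similarly simple expressions for the $b_{ij}$. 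Since $\Phi^{n-1}\big|_{x=1}=(1+t)^{4(n-1)}$, each $m_i$ and $M_{ij}$ then becomes a short integer-linear combination of coefficients $[t^m](1+t)^{4(n-1)-r}=\binom{4(n-1)-r}{m}$. As a first sanity check, the $m_i$ should reproduce $\mathbb{E}[N_1]=\tfrac{n}{2}$, $\mathbb{E}[N_2]=\tfrac{n}{3}$, $\mathbb{E}[N_3]=\tfrac{n}{6}$ from Corollary~\ref{cor:average}.

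Finally I would assemble the table via $\operatorname{Var}(N_i)=\tfrac{M_{ii}+m_i}{T_n}-\tfrac{m_i^2}{T_n^2}$ and $\sigma_{ij}:=\operatorname{Cov}(N_i,N_j)=\tfrac{M_{ij}}{T_n}-\tfrac{m_im_j}{T_n^2}$, and simplify the resulting binomial ratios. The deterministic constraint $N_1+N_2+N_3=n$ is worth exploiting twice over: it forces the covariance matrix to have vanishing row (and column) sums, so it suffices to compute only three entries — say $\operatorname{Var}(N_1)$, $\operatorname{Var}(N_2)$ and $\sigma_{12}$ — and to obtain the remaining three from $\sigma_{13}=-\sigma_{11}-\sigma_{12}$, $\sigma_{23}=-\sigma_{22}-\sigma_{12}$ and $\sigma_{33}=\sigma_{11}+2\sigma_{12}+\sigma_{22}$; and it simultaneously furnishes a stringent consistency check, since the three rows of the stated table indeed sum to zero. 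The main obstacle is purely computational: massaging the binomial-coefficient expressions into the compact rational functions in $n$ displayed in the table, where the cancellations (most notably for $\sigma_{33}$, whose numerator factors as $n(5n-4)(13n-18)$) require some care but are entirely routine.
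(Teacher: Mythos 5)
Your proposal is correct and follows essentially the same route as the paper: the same identity $[z^n](P_1+P_2+P_3)=\frac{1}{n-1}[t^n]\Phi(t)^{n-1}$ with $\Phi=F_{3,1}^2F_{3,2}^2G_{3,1}^{-2}$, differentiation in the $x_i$ at $x_1=x_2=x_3=1$, extraction of binomial coefficients from powers of $(1+t)$, and the standard conversion of (factorial) moments into variances and covariances. Your additional use of the deterministic constraint $N_1+N_2+N_3=n$ to deduce three of the six entries and to verify that the rows of the table sum to zero is a sensible labour-saving refinement, but not a different method.
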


\begin{proof}
We recall from the proof of Theorem~\ref{thm:main1} that
$$[z^nx_1^{\ell_1}x_2^{\ell_2}x_3^{\ell_3}] (P_1+P_2+P_3) = \frac{1}{n-1} [t^nx_1^{\ell_1}x_2^{\ell_2}x_3^{\ell_3}] \Phi(t)^{n-1},$$
where
$$\Phi(t) = (1+(x_1+x_2-x_3)t)^2(1+x_2t)^2(1+(x_2-x_3)t)^{-2}$$
in the special case $k=3$. For the variance of the number of vertices labelled $h$, we need to compute the second moment, which is
\begin{equation}\label{eq:second_moment}
\frac{[z^n] \frac{\partial^2}{\partial x_h^2} (P_1+P_2+P_3) |_{x_1=x_2=x_3 = 1} + [z^n] \frac{\partial}{\partial x_h} (P_1+P_2+P_3) |_{x_1=x_2=x_3 = 1}}{[z^n] (P_1+P_2+P_3) |_{x_1=x_2=x_3 = 1}},
\end{equation}
and then subtract the square of the mean. Likewise, the mixed moment of the number of vertices labelled $h$ and the number of vertices labelled $i$ is
$$\frac{[z^n] \frac{\partial^2}{\partial x_h \partial x_i} (P_1+P_2+P_3) |_{x_1=x_2=x_3 = 1}}{[z^n] (P_1+P_2+P_3) |_{x_1=x_2=x_3 = 1}},$$
from which we subtract the product of the means to obtain the covariance.

\medskip

Let us only show the calculations for the variance of the number of vertices labelled $1$ explicitly. Here, we obtain
\begin{align*}
[z^n] \frac{\partial^2}{\partial x_1^2} (P_1+P_2+P_3) |_{x_1=x_2=x_3 = 1} &= \frac{1}{n-1} [t^n] 2(n-1)(2n-3)t^2 (1+t)^{4n-6} \\
&= 2(2n-3) [t^{n-2}](1+t)^{4n-6} = 2(2n-3) \binom{4n-6}{n-2}.
\end{align*}
We already found earlier that
$$[z^n] \frac{\partial}{\partial x_1} (P_1+P_2+P_3) |_{x_1=x_2=x_3 = 1} = 2 \binom{4n-5}{n-1}$$
and
$$[z^n](P_1+P_2+P_3) |_{x_1=x_2=x_3 = 1} = \frac{1}{n-1} \binom{4n-4}{n}.$$
Plugging everything into~\eqref{eq:second_moment} and simplifying, we find a formula for the second moment and thus in turn for the variance.
\end{proof}

\begin{cor}
Let $n > 1$. The average number of vertices labelled $1,2,3$ respectively in $3$-plane trees with $n$ vertices whose root is labelled $1,2,3$ respectively is given in the following table:
\begin{center}
\begin{tabular}{c|ccc}
& $1$ & $2$ & $3$ \\
\hline
root label $1$ & $\frac{n^2}{2 n-1}$ & $\frac{n-1}{3}$ & $\frac{(n-1) (n+1)}{3 (2 n-1)}$ \\
root label $2$ & $\frac{n-1}{2}$ & $\frac{n^2+3 n-1}{3 n}$ & $\frac{(n-2) (n-1)}{6 n}$ \\
root label $3$ & $\frac{n}{2}$ & $\frac{(n-2) (n-1)}{3 n-1}$ & $\frac{n^2+5 n-4}{2 (3 n-1)}$
\end{tabular}
\end{center}
\end{cor}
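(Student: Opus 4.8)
The plan is to obtain each of the nine table entries as a ratio
$$
\frac{[z^n]\,\partial_{x_i}P_h\big|_{x_1=x_2=x_3=1}}{[z^n]\,P_h\big|_{x_1=x_2=x_3=1}},
$$
which is exactly the average number of vertices labelled $i$ in a $3$-plane tree with $n$ vertices and root label $h$: the numerator counts, with multiplicity, all label-$i$ vertices across such trees, while the denominator counts the trees themselves. The denominators are already supplied by the second formula of Corollary~\ref{cor:total}, namely $\frac{4-h}{3n-h+1}\binom{4n-h-1}{n-1}$ for root label $h$, so the whole task reduces to the nine numerators.

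First I would specialise everything to $k=3$. Here $\lceil k/2\rceil=2$ and $\lfloor k/2\rfloor=1$, so $F_{3,1}(t)=1+(x_1+x_2-x_3)t$, $F_{3,2}(t)=1+x_2t$, $G_{3,1}(t)=1+(x_2-x_3)t$, and $\Phi(t)=F_{3,1}(t)^2F_{3,2}(t)^2G_{3,1}(t)^{-2}$. By Proposition~\ref{pro:formulas_for_Pi} the generating functions are $P_1=x_1AF_{3,1}(A)^{-1}$, $P_2=x_2AF_{3,1}(A)^{-1}F_{3,2}(A)^{-1}G_{3,1}(A)$ and $P_3=x_3zF_{3,1}(A)G_{3,1}(A)^{-1}$. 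For the two cases $h\in\{1,2\}$ (where $h\le\lceil k/2\rceil$) I would use the Lagrange--B\"urmann representation $[z^n]P_h=\tfrac1n[t^{n-1}]f_h'(t)\Phi(t)^n$ with $f_h$ as in the proof of Theorem~\ref{thm:main1}, while for $h=3$ I would write $[z^n]P_3=[z^{n-1}]\big(x_3F_{3,1}(A)G_{3,1}(A)^{-1}\big)$ and apply~\eqref{eq:lag-bur} with index $n-1$ to obtain $\tfrac1{n-1}[t^{n-2}]f_3'(t)\Phi(t)^{n-1}$, where $f_3(t)=x_3F_{3,1}(t)G_{3,1}(t)^{-1}$.

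The key point is that after the Lagrange--B\"urmann step the auxiliary series $A$ has disappeared, so $[z^n]P_h$ is an explicit polynomial in $x_1,x_2,x_3$ and the operator $\partial_{x_i}$ can simply be moved inside the coefficient extraction: each numerator equals $\tfrac1n[t^{n-1}]\partial_{x_i}\!\big(f_h'(t)\Phi(t)^n\big)$ (and analogously with $\tfrac1{n-1}[t^{n-2}]$ and $\Phi^{n-1}$ for $h=3$). I would compute the $x_i$-derivative by logarithmic differentiation of the relevant $F$- and $G$-powers and only then set $x_1=x_2=x_3=1$. At that substitution the factors collapse to $F_{3,1}=F_{3,2}=1+t$ and $G_{3,1}=1$, so $\Phi\big|_{x=1}=(1+t)^4$ and each surviving integrand becomes a linear polynomial in $t$ times a power of $1+t$. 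Extracting the coefficient of $t^{n-1}$ (resp. $t^{n-2}$) then yields a difference of two binomial coefficients, exactly as in the proof of Corollary~\ref{cor:average}, and dividing by the denominator from Corollary~\ref{cor:total} and simplifying produces the tabulated rational function of $n$.

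The main obstacle is purely the bookkeeping of the differentiation, not any single hard step: the variable $x_2$ appears in all three factors $F_{3,1}$, $F_{3,2}$ and $G_{3,1}$, the variable $x_3$ appears in $F_{3,1}$ and—with the opposite sign and via a negative exponent—in $G_{3,1}$, whereas $x_1$ appears only in $F_{3,1}$. Keeping track of these signs and exponents correctly for the nine combinations is where care is needed; once the derivative is assembled and $x=1$ is substituted, the remaining coefficient extractions and binomial simplifications are routine. As a consistency check one can verify that the average of the three rows, weighted by the root-label counts from Corollary~\ref{cor:total}, reproduces the unconditional averages $\tfrac{(4-i)n}{6}$ obtained from Corollary~\ref{cor:average}, since the numerators summed over $h$ give the total number of label-$i$ vertices over all $3$-plane trees.
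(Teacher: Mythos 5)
Your proposal is correct and takes essentially the same route as the paper: the paper likewise applies the Lagrange--B\"urmann formula to the explicit $k=3$ specialisations of $P_1,P_2,P_3$ from Proposition~\ref{pro:formulas_for_Pi}, differentiates the resulting polynomial identity for $[z^n]P_h$ with respect to $x_i$ before setting $x_1=x_2=x_3=1$ (so that everything collapses to powers of $1+t$ and the extraction yields binomial coefficients), and divides by the root-label counts from Corollary~\ref{cor:total}. The only cosmetic difference is that for root label $3$ the paper uses the equivalent $z$-free representation $P_3 = x_3 A (1+(x_2-x_3)A)(1+x_2A)^{-2}(1+(x_1+x_2-x_3)A)^{-1}$ with Lagrange--B\"urmann at index $n$, whereas you use the $z$-bearing form $x_3 z F_{3,1}(A)G_{3,1}(A)^{-1}$ at index $n-1$, exactly as the paper itself does in the case $h>\lceil k/2\rceil$ of Theorem~\ref{thm:main1}.
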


\begin{proof}
We recall from the proof of Theorem~\ref{thm:main1} that
$$P_1 = \frac{x_1 A}{1 + (x_1+x_2-x_3)A} ,\ P_2 = \frac{x_2 A(1 + (x_2-x_3)A)}{(1 + x_2 A) (1 + (x_1+x_2-x_3)A)},$$
$$P_3 = \frac{x_3 A (1 + (x_2-x_3) A )}{(1 + x_2A)^2 (1 + (x_1+x_2-x_3)A)},$$
with
$$A = z (1+(x_1+x_2-x_3)A)^2(1+x_2A)^2(1+(x_2-x_3)A)^{-2}.$$
In order to determine the desired mean values, we need the coefficients of the partial derivatives $\frac{\partial}{\partial x_h} P_i |_{x_1=x_2=x_3 = 1}$. 
We will show the details of the calculations in one of the cases again: the number of vertices labelled $1$ in $3$-plane trees whose root label is $1$.
Since
$$\frac{\partial}{ \partial t} \frac{x_1 t}{1 + (x_1+x_2-x_3)t} = \frac{x_1}{(1+(x_1+x_2-x_3)t)^2},$$
we have
$$[z^n] P_1 = \frac{1}{n} [t^{n-1}] \frac{x_1}{(1+(x_1+x_2-x_3)t)^2} (1+(x_1+x_2-x_3)t)^{2n}(1+x_2t)^{2n}(1+(x_2-x_3)t)^{-2n}.$$
Now differentiate with respect to $x_1$ and set $x_1 = x_2 = x_3 = 1$:
\begin{align*}
\frac{\partial}{\partial x_1} [z^n] P_1 \Big|_{x_1 = x_2 = x_3 = 1} &= \frac{1}{n} [t^{n-1}] (1+(2n-1)t) (1+t)^{4n-3} \\
&= \frac{1}{n} \Big( \binom{4n-3}{n-1} + (2n-1) \binom{4n-3}{n-2} \Big).
\end{align*}
Dividing by the total number of $3$-plane trees with $n$ vertices and root label $1$, which is $\frac{1}{n} \binom{4n-2}{n-1}$, we obtain the mean number of vertices labelled $1$, namely
$$\frac{\frac{1}{n}( \binom{4n-3}{n-1} + (2n-1) \binom{4n-3}{n-2})}{\frac{1}{n} \binom{4n-2}{n-1}} = \frac{n^2}{2n-1}.$$

 \end{proof}

\section{Noncrossing trees}
\label{sec:noncrossing}

Our aim in this section is to obtain analogous results for noncrossing trees. In particular, we will prove Theorem~\ref{thm:main2}. To this end, we set up a system of functional equations once again. We fix $k$ and let $\mathcal{N}_r$ denote the set of $k$-noncrossing trees whose root is labelled $r$. As before, we let $h_i(T)$ be the number of vertices labelled $i$ in a tree $T$, and $|T|$ the total number of vertices of $T$. Finally, we define the following generating functions in analogy to the generating functions $P_r$ in the previous section.
$$N_r = N_r(z,x_1,x_2,\ldots,x_k) = \sum_{T \in \mathcal{N}_r} z^{|T|} \prod_{i=1}^k x_i^{h_i(T)}.$$
The decomposition of $k$-noncrossing trees is slightly more subtle than that of plane trees. Every noncrossing tree can be decomposed into the root and a sequence of so-called \emph{butterflies}, which are pairs of noncrossing trees joined at a common root. The roots of these butterflies are the children $v_{i_1},v_{i_2},\ldots,v_{i_r}$ of the root $v_1$. One part of the butterfly rooted at $v_{i_j}$ contains all those vertices whose indices are less than or equal to than $i_j$ (i.e., vertices $v_s$ with $s \leq i_j$), the other contains all those vertices whose indices are greater than or equal to $i_j$ (i.e., vertices $v_s$ with $s \geq i_j$). We refer to them as \emph{lower} and \emph{upper part} of a butterfly; they only have the vertex $v_{i_j}$ in common. See Figure~\ref{fig:butter} for an illustration.

\begin{figure}[htbp]
\centering
\begin{tikzpicture}
\node[draw,circle,inner sep=2pt] (u1) at (0,3) {\phantom{2}};
\node[draw,circle,inner sep=2pt] (u2) at (3,0) {\phantom{2}};
\node[draw,circle,inner sep=2pt] (u3) at (0,-3) {\phantom{2}};
\node[draw,circle,inner sep=2pt] (u4) at (-3,0) {\phantom{2}};

\draw (u1)--(u2);
\draw (u1)--(u3);
\draw (u1)--(u4);

\node at (0,3.6) {$v_1$};
\node at (3.6,0) {$v_{i_1}$};
\node at (0,-3.6) {$v_{i_2}$};
\node at (-3.6,0) {$v_{i_3}$};

\draw (u2)--(2.5,2)--(3.5,2)--(u2);
\draw (u2)--(2.5,-2)--(3.5,-2)--(u2);
\draw (u3)--(-2,-2.5)--(-2,-3.5)--(u3);
\draw (u3)--(2,-2.5)--(2,-3.5)--(u3);
\draw (u4)--(-2.5,2)--(-3.5,2)--(u4);
\draw (u4)--(-2.5,-2)--(-3.5,-2)--(u4);

\node at (4.5,1) {lower part};
\node at (4.5,-1) {upper part};
\end{tikzpicture}
\caption{The butterfly decomposition.}\label{fig:butter}
\end{figure}
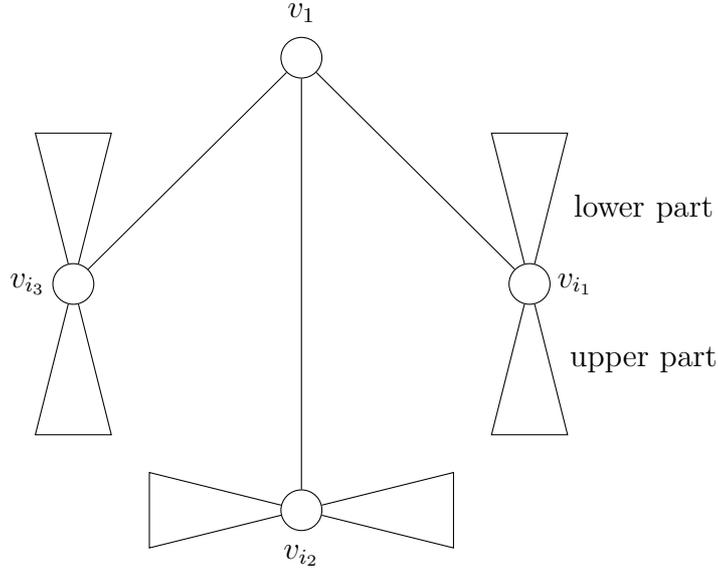

\medskip

Each of the two parts of a butterfly can be seen as a noncrossing tree. However, because of the definition of $k$-noncrossing trees, which involves the order of the vertices on the circle, the two parts are slightly different. The upper part containing vertices $v_s$ with $s \geq i_j$ forms a proper $k$-noncrossing tree. However, the lower part is only almost a $k$-noncrossing tree: the rule on labels not adding up to values greater than $k+1$ does not apply to the root edges (for all other edges, the rule is exactly as it is in a proper $k$-noncrossing tree). Thus the lower part is not necessarily a proper $k$-noncrossing tree, but it always becomes one by changing the root label to $1$ (if it is not already $1$). This is because a label $1$ can always be paired with any other label along an edge. Thus we find that a butterfly with root label $r$ has generating function
$$N_r \cdot \frac{N_1}{x_1 z}.$$
The first factor represents the upper part of the butterfly, the second factor the lower part, but excluding the root. This is achieved by the denominator.

\medskip

Arguing as in the previous section, we see that a $k$-noncrossing tree with root label $r$ has branches that are butterflies with root labels in $[k+1-r]$. Thus 
\begin{equation}\label{eq:mainequation_nc}
N_r = x_r z \sum_{j \geq 0} \Big( \frac{N_1}{x_1 z} (N_1 + N_2 + \cdots + N_{k+1-r}) \Big)^j = \frac{x_r z}{1-\frac{N_1}{x_1 z}(N_1 + N_2 + \cdots + N_{k+1-r})}
\end{equation}
for all $r \in [k]$. 

\medskip

We use the same expressions $F_{k,i}(t)$ and $G_{k,i}(t)$ as in the previous section. The substitution, however, will be slightly different. In analogy to~\eqref{eq:subst}, we set
\begin{equation}\label{eq:subst_nc}
N_1 = x_1 \sqrt{\frac{z B}{F_{k,1}(B)}}.
\end{equation}
Again, it is not hard to verify that there exists a suitable power series $B$ that satisfies this equation, and that it is unique. Next, we prove an analogue of Proposition~\ref{pro:formulas_for_Pi}.

\begin{pro}\label{pro:formulas_for_Ni}
The power series $N_1,N_2,\ldots,N_k$ can be expressed in terms of $B$ and the variables $x_1,x_2,\ldots,x_k$ and  $z$ in the following way: for $1 \leq h \leq k$,
\begin{align*}
N_h &= x_h \sqrt{\frac{z B}{F_{k,1}(B)}} \prod_{i=2}^{h} F_{k,i}(B)^{-1} \prod_{i=1}^{h-1} G_{k,i}(B), \\
N_{k+1-h} &= x_{k+1-h} z \prod_{i=1}^{h} F_{k,i}(B) \prod_{i=1}^h G_{k,i}(B)^{-1}.
\end{align*}
\end{pro}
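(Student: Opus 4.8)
The plan is to mimic the inductive scheme of Proposition~\ref{pro:formulas_for_Pi} almost verbatim, the only genuinely new ingredient being the extra factor $N_1/(x_1 z)$ that appears in~\eqref{eq:mainequation_nc}. Writing $W = \sqrt{zB/F_{k,1}(B)}$, so that the substitution~\eqref{eq:subst_nc} reads simply $N_1 = x_1 W$, the key observation I would record at the outset is that
$$\frac{N_1^2}{x_1 z} = \frac{x_1 W^2}{z} = \frac{x_1 B}{F_{k,1}(B)},$$
which is exactly the quantity that played the role of $P_1 = x_1 A/F_{k,1}(A)$ in the plane-tree argument. Alongside this I would note the two rewritings $N_1/(x_1 z) = W/z$ and $z = W^2 F_{k,1}(B)/B$, since every simplification below amounts to clearing one of these. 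For the base case $h=1$, the first formula is the substitution~\eqref{eq:subst_nc} itself (both products being empty). For the second formula with $h=1$ I would take $r=k$ in~\eqref{eq:mainequation_nc}, so that only $N_1$ occurs in the sum, and compute
$$N_k = \frac{x_k z}{1 - \frac{N_1^2}{x_1 z}} = \frac{x_k z}{1 - \frac{x_1 B}{F_{k,1}(B)}} = \frac{x_k z F_{k,1}(B)}{G_{k,1}(B)},$$
the last step being the first recursion in~\eqref{eq:rec1}.

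For the induction step I would subtract the instances of~\eqref{eq:mainequation_nc} for $r=h$ and $r=h+1$ to obtain the analogue of~\eqref{eq:after_elimination}, namely
$$\frac{N_1}{x_1 z}\, N_{k+1-h} = \frac{x_{h+1} z}{N_{h+1}} - \frac{x_h z}{N_h},$$
and solve this for $N_{h+1}$. Substituting the induction hypothesis for $N_h$ and $N_{k+1-h}$, every occurrence of $z$ and $N_1$ collapses through the rewritings above, leaving a common factor multiplied by $x_{k+1-h} B + G_{k,h}(B)$, which the first recursion in~\eqref{eq:rec1} identifies as $F_{k,h+1}(B)$. This produces precisely the claimed expression for $N_{h+1}$ (note that the factor $F_{k,1}(B)$ freed up by the rewriting $z/W = W F_{k,1}(B)/B$ is what makes the surviving product start at $i=2$).

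To obtain $N_{k-h}$ I would reuse the same difference relation with the running index set to $k-h$, giving $\frac{N_1}{x_1 z} N_{h+1} = \frac{x_{k+1-h} z}{N_{k+1-h}} - \frac{x_{k-h} z}{N_{k-h}}$, and solve for $N_{k-h}$, now feeding in the freshly computed $N_{h+1}$ together with the induction hypothesis for $N_{k+1-h}$. Here the combination that appears after simplification is $F_{k,h+1}(B) - x_{h+1} B$, which the second recursion in~\eqref{eq:rec1} turns into $G_{k,h+1}(B)$, yielding the claimed formula for $N_{k-h} = N_{k+1-(h+1)}$ and closing the induction.

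The main obstacle is purely the additional bookkeeping created by the square-root substitution: unlike in the plane-tree case, half-integer powers of $F_{k,1}(B)$ enter through $W$, and these factors must be tracked carefully and repeatedly traded between $z$, $B$ and $F_{k,1}(B)$ so that they cancel to leave a clean product formula. Once the identity $N_1^2/(x_1 z) = x_1 B/F_{k,1}(B)$ is in place, however, the algebra runs in exact parallel with the proof of Proposition~\ref{pro:formulas_for_Pi}, and the recursions~\eqref{eq:rec1} do all of the structural work.
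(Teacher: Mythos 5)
Your proposal is correct and follows essentially the same route as the paper's own proof: induction on $h$, the base case coming from the substitution~\eqref{eq:subst_nc} and the $r=k$ instance of~\eqref{eq:mainequation_nc}, and the step obtained by differencing~\eqref{eq:mainequation_nc} for $r=h$ and $r=h+1$ (and then with index $k-h$), with the same cancellations and the same uses of~\eqref{eq:rec1} to recognise $x_{k+1-h}B+G_{k,h}(B)=F_{k,h+1}(B)$ and $F_{k,h+1}(B)-x_{h+1}B=G_{k,h+1}(B)$. One trivial label slip: the base-case step $F_{k,1}(B)-x_1B=G_{k,1}(B)$ is the \emph{second} identity in~\eqref{eq:rec1}, not the first (amusingly, the paper itself miscites it as~\eqref{eq:rec2}).
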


\begin{proof}
The proof is analogous to that of Proposition~\ref{pro:formulas_for_Pi} by induction on $h$. For $h = 1$, the first equation is exactly our substitution~\eqref{eq:subst_nc}, while the second equation follows from~\eqref{eq:mainequation_nc} for $r = k$ and an application of~\eqref{eq:rec2}:
$$N_k = \frac{x_k z}{1-\frac{N_1^2}{x_1 z}} = \frac{x_k z}{1 - \frac{x_1^2 z B}{x_1 z F_{k,1}(B)}} = \frac{x_k z F_{k,1}(B)}{F_{k,1}(B) - x_1 B} = \frac{x_k z F_{k,1}(B)}{G_{k,1}(B)}.$$
For the induction step, use~\eqref{eq:mainequation_nc} with $r = h$ and $r = h+1$ respectively, which yields
\begin{align*}
1 - \frac{N_1}{x_1 z}(N_1 + N_2 + \cdots + N_{k+1-h}) = \frac{x_h z}{N_h}, \\
1 - \frac{N_1}{x_1 z}(N_1 + N_2 + \cdots + N_{k-h}) = \frac{x_{h+1} z}{N_{h+1}}.
\end{align*}
Now take the difference:
\begin{equation}\label{eq:after_elimination_nc}
\frac{N_1 N_{k+1-h}}{x_1 z} = \frac{x_{h+1}z}{N_{h+1}} - \frac{x_hz}{N_h}.
\end{equation}
After some manipulations, this gives us
\begin{equation}\label{eq:induction_step_nc}
N_{h+1} = \frac{x_{h+1}z}{\frac{N_1 N_{k+1-h}}{x_1 z} + \frac{x_h z}{N_h}}.
\end{equation}
Now it only remains to apply the induction hypothesis and simplify:
\begin{align*}
N_{h+1} &= \frac{x_{h+1}z}{\sqrt{\frac{z B}{F_{k,1}(B)}} x_{k+1-h} \prod_{i=1}^h F_{k,i}(B) \prod_{i=1}^h G_{k,i}(B)^{-1} + \sqrt{\frac{zF_{k,1}(B)}{B}} \prod_{i=2}^h F_{k,i}(B) \prod_{i=1}^{h-1} G_{k,i}(B)^{-1}} \\
&= \frac{x_{h+1}}{x_{k+1-h}B + G_{k,h}(B)} \sqrt{\frac{z B}{F_{k,1}(B)}} \prod_{i=2}^h F_{k,i}(B)^{-1} \prod_{i=1}^h G_{k,i}(B) \\
&= \frac{x_{h+1}}{F_{k,h+1}(B)} \sqrt{\frac{z B}{F_{k,1}(B)}} \prod_{i=2}^h F_{k,i}(B)^{-1} \prod_{i=1}^h G_{k,i}(B) \\
&= x_{h+1} \sqrt{\frac{z B}{F_{k,1}(B)}} \prod_{i=2}^{h+1} F_{k,i}(B)^{-1} \prod_{i=1}^h G_{k,i}(B).
\end{align*}
Likewise, replacing $h$ by $k-h$ in~\eqref{eq:after_elimination_nc} gives us
$$\frac{N_1 N_{h+1}}{x_1 z} = \frac{x_{k+1-h}z}{N_{k+1-h}} - \frac{x_{k-h}z}{N_{k-h}}.$$
Thus
$$N_{k-h} = \frac{x_{k-h}z}{\frac{x_{k+1-h}z}{N_{k+1-h}} - \frac{N_1 N_{h+1}}{x_1 z}}.$$
Now plug in~\eqref{eq:induction_step_nc} and apply the induction hypothesis. Again, we obtain the desired formula after some further manipulations.
\end{proof}

\begin{cor}\label{cor:identity_for_B_nc}
The power series $B$ satisfies the equation
$$B = z F_{k,1}(B)^3 \prod_{i=2}^{\lceil k/2 \rceil} F_{k,i}(B)^4 \prod_{i=1}^{\lfloor k/2 \rfloor} G_{k,i}(B)^{-4}.$$
\end{cor}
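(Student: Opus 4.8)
The plan is to mimic the proof of Corollary~\ref{cor:identity_for_A} exactly, exploiting the fact that Proposition~\ref{pro:formulas_for_Ni} gives us two distinct representations of $N_h$ whose equality forces a functional equation on $B$. Concretely, I would take the second formula in Proposition~\ref{pro:formulas_for_Ni}, namely $N_{k+1-h} = x_{k+1-h} z \prod_{i=1}^h F_{k,i}(B) \prod_{i=1}^h G_{k,i}(B)^{-1}$, and replace $h$ by $k+1-h$ to obtain
\begin{equation*}
N_h = x_h z \prod_{i=1}^{k+1-h} F_{k,i}(B) \prod_{i=1}^{k+1-h} G_{k,i}(B)^{-1}.
\end{equation*}
Setting this equal to the first formula $N_h = x_h \sqrt{z B / F_{k,1}(B)} \prod_{i=2}^{h} F_{k,i}(B)^{-1} \prod_{i=1}^{h-1} G_{k,i}(B)$ and cancelling the common factor $x_h$ yields, after isolating the square root and squaring, an expression of the form
\begin{equation*}
\frac{zB}{F_{k,1}(B)} = z^2 \Big( \prod_{i=1}^{k+1-h} F_{k,i}(B) \prod_{i=2}^{h} F_{k,i}(B) \prod_{i=1}^{h-1} G_{k,i}(B)^{-1} \prod_{i=1}^{k+1-h} G_{k,i}(B)^{-1} \Big)^2,
\end{equation*}
which I would then solve for $B$.

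Once squared, the argument is essentially identical to the proof of Corollary~\ref{cor:identity_for_A}: I would use the symmetry relations~\eqref{eq:sym}, which state $F_{k,i} = F_{k,k+2-i}$ and $G_{k,i} = G_{k,k+1-i}$, to reindex the products $\prod_{i=1}^{k+1-h}$ so that they telescope against the shorter products and combine into a clean product over the full index range. The key difference from the plane-tree case is bookkeeping: the extra factor $F_{k,1}(B)^{-1}$ from the substitution~\eqref{eq:subst_nc} (note the square root and the fact that the first product in $N_h$ starts at $i=2$, not $i=1$) will survive the symmetrization and be responsible for the asymmetric exponents $F_{k,1}(B)^3$ versus $F_{k,i}(B)^4$ for $i \geq 2$ in the claimed identity. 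After collapsing the full-range products I would apply~\eqref{eq:sym} once more, together with the midpoint observation~\eqref{eq:midpoint} that lets me discard $F_{k,k/2+1}$ when $k$ is even and $G_{k,(k+1)/2}$ when $k$ is odd, to fold the product ranges down to $\lceil k/2 \rceil$ and $\lfloor k/2 \rfloor$ respectively.

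The main obstacle I anticipate is keeping the exponent on $F_{k,1}$ correct through the squaring and symmetrization. In the plane-tree proof every factor appeared symmetrically and collapsed to the uniform exponents $2$ and $-2$; here the square root combined with the $i=2$ starting index breaks that uniformity precisely at $i=1$, so I must track that single factor carefully rather than treating all $F_{k,i}$ alike. I expect that writing $\prod_{i=2}^{\lceil k/2\rceil} F_{k,i}(B)^4 = F_{k,1}(B)^{-4}\prod_{i=1}^{\lceil k/2\rceil} F_{k,i}(B)^4$ momentarily and then restoring the $F_{k,1}(B)^3$ at the end will make the accounting transparent. As in Corollary~\ref{cor:identity_for_A}, the choice of $h$ is immaterial: any value in $[k]$ produces the same final identity, which provides a useful internal consistency check on the reindexing.
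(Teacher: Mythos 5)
Your proposal follows the paper's own proof essentially verbatim: equating the two representations of $N_h$ from Proposition~\ref{pro:formulas_for_Ni}, squaring, then applying the symmetry relations~\eqref{eq:sym} and the midpoint identities~\eqref{eq:midpoint} to collapse the products, with the square-root substitution~\eqref{eq:subst_nc} accounting for the exponent $3$ on $F_{k,1}(B)$. Your intermediate squared equation and the bookkeeping of the $F_{k,1}$ factor are correct, and your closing remark that the choice of $h$ is immaterial matches the paper's own observation.
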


\begin{proof}
In analogy to Corollary~\ref{cor:identity_for_A}, we use the two representations for $N_h$ provided by Proposition~\ref{pro:formulas_for_Ni}:
$$N_h = x_h \sqrt{\frac{z B}{F_{k,1}(B)}} \prod_{i=2}^{h} F_{k,i}(B)^{-1} \prod_{i=1}^{h-1} G_{k,i}(B) = x_{h} z \prod_{i=1}^{k+1-h} F_{k,i}(B) \prod_{i=1}^{k+1-h} G_{k,i}(B)^{-1}.$$
Applying the symmetry relations~\eqref{eq:sym}, we get
$$\sqrt{\frac{B}{F_{k,1}(B)}} \prod_{i=2}^{h} F_{k,i}(B)^{-1} \prod_{i=1}^{h-1} G_{k,i}(B) = \sqrt{z} \prod_{i=h+1}^{k+1} F_{k,i}(B) \prod_{i=h}^{k} G_{k,i}(B)^{-1}.$$
Squaring and simplifying yields
$$B = z F_{k,1}(B)\prod_{i=2}^{k+1} F_{k,i}(B)^2 \prod_{i=1}^{k} G_{k,i}(B)^{-2}.$$
Applying the symmetry properties as well as~\eqref{eq:midpoint} once again, we arrive at the stated formula:
$$B = z F_{k,1}(B)^3 \prod_{i=2}^{\lceil k/2 \rceil} F_{k,i}(B)^4 \prod_{i=1}^{\lfloor k/2 \rfloor} G_{k,i}(B)^{-4}.$$
As in the proof of Corollary~\ref{cor:identity_for_A}, $h$ was arbitrary in these calculations.
\end{proof}

We are now ready to prove the second main theorem of this paper. It is very similar to the proof of Theorem~\ref{thm:main1}, with some small modifications.

\begin{proof}[Proof of Theorem~\ref{thm:main2}]
As before, we apply the Lagrange-B\"urmann formula, based on Proposition~\ref{pro:formulas_for_Ni} and Corollary~\ref{cor:identity_for_B_nc}. We start with the case that $h \leq \lceil k/2 \rceil$. In view of Proposition~\ref{pro:formulas_for_Ni} and Corollary~\ref{cor:identity_for_B_nc}, we can apply~\eqref{eq:lag-bur} with
$$\Phi(t) = F_{k,1}(t)^3 \prod_{i=2}^{\lceil k/2 \rceil} F_{k,i}(t)^4 \prod_{i=1}^{\lfloor k/2 \rfloor} G_{k,i}(t)^{-4}$$
and
$$f(t) = x_h \sqrt{\frac{t}{F_{k,1}(t)}} \prod_{i=2}^{h} F_{k,i}(t)^{-1} \prod_{i=1}^{h-1} G_{k,i}(t),$$
but we have to take the coefficient of $z^{n-1/2}$ in view of the factor $\sqrt{z}$ in the formula for $N_h$. Once again, we apply logarithmic differentiation to determine the derivative of $f(t)$. We find that
\begin{align*}
f'(t) &= f(t) \Big( \frac1{2t} - \frac{F'_{k,1}(t)}{2F_{k,1}(t)} - \sum_{i=2}^h \frac{F'_{k,i}(t)}{F_{k,i}(t)} + \sum_{i=1}^{h-1} \frac{G'_{k,i}(t)}{G_{k,i}(t)} \Big) \\
&= f(t) \Big( \frac1{2t} - \frac{1}{2t} \Big( 1 - \frac{1}{F_{k,1}(t)} \Big) - \frac1{t} \sum_{i=2}^h \Big( 1 - \frac{1}{F_{k,i}(t)} \Big) + \frac1{t} \sum_{i=1}^{h-1} \Big( 1 - \frac{1}{G_{k,i}(t)} \Big) \Big) \\
&= \frac{f(t)}{t} \Big( \frac{1}{2F_{k,1}(t)} + \sum_{i=2}^h \frac{1}{F_{k,i}(t)} - \sum_{i=1}^{h-1} \frac{1}{G_{k,i}(t)} \Big).
\end{align*}
So we have
\begin{align}
[z^n x_1^{\ell_1} \cdots x_k^{\ell_k}] N_h &= \frac{1}{n-\frac12} [t^{n-3/2} x_1^{\ell_1} \cdots x_k^{\ell_k}] f'(t) \Phi(t)^{n-1/2} \nonumber \\
&= \frac{2}{2n-1} [t^{n-3/2} x_1^{\ell_1} \cdots x_k^{\ell_k}] 
x_h \sqrt{\frac{1}{tF_{k,1}(t)}} \prod_{i=2}^{h} F_{k,i}(t)^{-1} \prod_{i=1}^{h-1} G_{k,i}(t)  \nonumber \\
&\qquad \Big( \frac{1}{2F_{k,1}(t)} + \sum_{i=2}^h \frac{1}{F_{k,i}(t)} - \sum_{i=1}^{h-1} \frac{1}{G_{k,i}(t)} \Big) \nonumber \\
&\qquad F_{k,1}(t)^{3n-3/2} \prod_{i=2}^{\lceil k/2 \rceil} F_{k,i}(t)^{4n-2} \prod_{i=1}^{\lfloor k/2 \rfloor} G_{k,i}(t)^{2-4n} \nonumber \\
&= \frac{2}{2n-1} [t^{n-1} x_1^{\ell_1} \cdots x_h^{\ell_h-1} \cdots x_k^{\ell_k}] \Big( \prod_{i=h+1}^{\lceil k/2 \rceil} F_{k,i}(t) \prod_{i=h}^{\lfloor k/2 \rfloor} G_{k,i}(t)^{-1} \Big)^{4n-2}\nonumber \\
&\qquad F_{k,1}(t)^{3n-2} \Big( \prod_{i=2}^{h} F_{k,i}(t) \prod_{i=1}^{h-1} G_{k,i}(t)^{-1} \Big)^{4n-3} \label{eq:after_lagrange_nc} \\
&\qquad \Big(  \frac{1}{2F_{k,1}(t)} + \sum_{i=2}^h \frac{1}{F_{k,i}(t)} - \sum_{i=1}^{h-1} \frac{1}{G_{k,i}(t)} \Big). \nonumber
\end{align}
We remark that we are applying the Lagrange-B\"urmann formula, somewhat unusually, in a situation where we have half-integer exponents in our power series, but it is not difficult to verify that it works equally well. At this point, we drop the variable $t$ again (by setting $t=1$), since we know that the coefficient of $t^{n-1} x_1^{\ell_1} \cdots x_h^{\ell_h-1} \cdots x_k^{\ell_k}$ is only nonzero when $\ell_1 + \cdots + \ell_k = n$. We will also write $F_{k,i}$ and $G_{k,i}$ instead of $F_{k,i}(1)$ and $G_{k,i}(1)$ again.

\medskip

As in the proof of Theorem~\ref{thm:main1}, we observe that $F_{k,1},F_{k,2},\ldots,F_{k,h}$ and $G_{k,1},G_{k,2},\ldots,G_{k,h-1}$ are the only factors that contain the variable $x_h$. Moreover, using logarithmic differentiation once again, one finds that
\begin{multline*}
\frac{\partial}{\partial x_h} F_{k,1}^{3n-2} \Big( \prod_{i=2}^{h} F_{k,i} \prod_{i=1}^{h-1} G_{k,i}^{-1} \Big)^{4n-3} \\
= (4n-3) F_{k,1}^{3n-2} \Big( \prod_{i=2}^{h} F_{k,i} \prod_{i=1}^{h-1} G_{k,i}^{-1} \Big)^{4n-3}
\Big( \frac{3n-2}{(4n-3)F_{k,1}} + \sum_{i=2}^h \frac{1}{F_{k,i}} - \sum_{i=1}^{h-1} \frac{1}{G_{k,i}} \Big).
\end{multline*}
Now we split the expression in~\eqref{eq:after_lagrange_nc} into two parts, one of which can be seen as a derivative with respect to $x_h$ in the same way as in the proof of Theorem~\ref{thm:main1}:
\begin{align*}
[z^n x_1^{\ell_1} \cdots x_k^{\ell_k}] N_h &= \frac{2}{2n-1} [x_1^{\ell_1} \cdots x_h^{\ell_h-1} \cdots x_k^{\ell_k}] \Big( \prod_{i=h+1}^{\lceil k/2 \rceil} F_{k,i} \prod_{i=h}^{\lfloor k/2 \rfloor} G_{k,i}^{-1} \Big)^{4n-2}  \\
&\qquad F_{k,1}^{3n-2} \Big( \prod_{i=2}^{h} F_{k,i} \prod_{i=1}^{h-1} G_{k,i}^{-1} \Big)^{4n-3} \Big( \frac{3n-2}{(4n-3)F_{k,1}} + \sum_{i=2}^h \frac{1}{F_{k,i}} - \sum_{i=1}^{h-1} \frac{1}{G_{k,i}} \Big) \\
&\quad -\frac{2}{2n-1} [x_1^{\ell_1} \cdots x_h^{\ell_h-1} \cdots x_k^{\ell_k}] \Big( \prod_{i=h+1}^{\lceil k/2 \rceil} F_{k,i} \prod_{i=h}^{\lfloor k/2 \rfloor} G_{k,i}^{-1} \Big)^{4n-2}  \\
&\qquad F_{k,1}^{3n-2} \Big( \prod_{i=2}^{h} F_{k,i} \prod_{i=1}^{h-1} G_{k,i}^{-1} \Big)^{4n-3} \cdot \frac{2n-1}{2(4n-3)F_{k,1}} \\
&= \frac{2\ell_h}{(2n-1)(4n-3)} [x_1^{\ell_1} \cdots x_k^{\ell_k}] \Big( \prod_{i=h+1}^{\lceil k/2 \rceil} F_{k,i} \prod_{i=h}^{\lfloor k/2 \rfloor} G_{k,i}^{-1} \Big)^{4n-2} \\
&\qquad F_{k,1}^{3n-2} \Big( \prod_{i=2}^{h} F_{k,i} \prod_{i=1}^{h-1} G_{k,i}^{-1} \Big)^{4n-3} \\
&\quad - \frac{1}{4n-3} [x_1^{\ell_1} \cdots x_h^{\ell_h-1} \cdots x_k^{\ell_k}]  \Big( \prod_{i=h+1}^{\lceil k/2 \rceil} F_{k,i} \prod_{i=h}^{\lfloor k/2 \rfloor} G_{k,i}^{-1} \Big)^{4n-2} \\
&\qquad F_{k,1}^{3n-3} \Big( \prod_{i=2}^{h} F_{k,i} \prod_{i=1}^{h-1} G_{k,i}^{-1} \Big)^{4n-3}.
\end{align*}

Now we can extract coefficients from both products in the same way as in the proof of Theorem~\ref{thm:main2}, i.e. by considering the variables in the order $x_1,x_k,x_2,x_{k-1},\ldots$. 

\medskip

The derivation of the formula in the case that $h > \lceil k/2 \rceil$ is similar: we start from the representation
$$N_h = x_h z \prod_{i=1}^{k+1-h} F_{k,i}(B) \prod_{i=1}^{k+1-h} G_{k,i}(B)^{-1},$$
which means that we can apply the Lagrange-B\"urmann formula with
$$f(t) = x_h \prod_{i=1}^{k+1-h} F_{k,i}(t) \prod_{i=1}^{k+1-h} G_{k,i}(t)^{-1}$$
and the same function $\Phi$ as before. In view of the factor $z$ in the expression for $N_h$, we have to extract the coefficient of $z^{n-1}$. Thus
\begin{align*}
[z^n x_1^{\ell_1} \cdots x_k^{\ell_k}] N_h &= \frac{1}{n-1} [t^{n-2} x_1^{\ell_1} \cdots x_k^{\ell_k}] \frac{x_h}{t} \prod_{i=1}^{k+1-h} F_{k,i}(t) \prod_{i=1}^{k+1-h} G_{k,i}(t)^{-1} \\
&\qquad \Big( \sum_{i=1}^{k+1-h} \frac{1}{G_{k,i}(t)} - \sum_{i=1}^{k+1-h} \frac{1}{F_{k,i}(t)} \Big) \\
&\qquad F_{k,1}(t)^{3n-3}  \Big( \prod_{i=2}^{\lceil k/2 \rceil} F_{k,i}(t) \prod_{i=1}^{\lfloor k/2 \rfloor} G_{k,i}(t)^{-1} \Big)^{4n-4} \\
&= \frac{1}{n-1} [t^{n-1} x_1^{\ell_1} \cdots x_h^{\ell_h-1} \cdots x_k^{\ell_k}] \Big( \prod_{i=k+2-h}^{\lceil k/2 \rceil} F_{k,i}(t) \prod_{i=k+2-h}^{\lfloor k/2 \rfloor} G_{k,i}(t)^{-1} \Big)^{4n-4} \\
&\qquad F_{k,1}(t)^{3n-2} \Big( \prod_{i=2}^{k+1-h} F_{k,i}(t) \prod_{i=1}^{k+1-h} G_{k,i}(t)^{-1} \Big)^{4n-3} \\
&\qquad \Big( \sum_{i=1}^{k+1-h} \frac{1}{G_{k,i}(t)} - \sum_{i=1}^{k+1-h} \frac{1}{F_{k,i}(t)} \Big).
\end{align*}
As before, we drop the variable $t$ now and write $F_{k,i}$ and $G_{k,i}$ instead of $F_{k,i}(1)$ and $G_{k,i}(1)$. The appropriate split in this case is
\begin{align*}
[z^n x_1^{\ell_1} \cdots x_k^{\ell_k}] N_h &= \frac{1}{n-1} [x_1^{\ell_1} \cdots x_h^{\ell_h-1} \cdots x_k^{\ell_k}] \Big( \prod_{i=k+2-h}^{\lceil k/2 \rceil} F_{k,i} \prod_{i=k+2-h}^{\lfloor k/2 \rfloor} G_{k,i}^{-1} \Big)^{4n-4} \\
&\qquad F_{k,1}^{3n-2} \Big( \prod_{i=2}^{k+1-h} F_{k,i} \prod_{i=1}^{k+1-h} G_{k,i}^{-1} \Big)^{4n-3} \\
&\qquad \Big( \sum_{i=1}^{k+1-h} \frac{1}{G_{k,i}} - \sum_{i=2}^{k+1-h} \frac{1}{F_{k,i}} - \frac{3n-2}{(4n-3)F_{k,1}} \Big) \\
&\quad- \frac{1}{n-1} [x_1^{\ell_1} \cdots x_h^{\ell_h-1} \cdots x_k^{\ell_k}] \Big( \prod_{i=k+2-h}^{\lceil k/2 \rceil} F_{k,i} \prod_{i=k+2-h}^{\lfloor k/2 \rfloor} G_{k,i}^{-1} \Big)^{4n-4} \\
&\qquad F_{k,1}^{3n-2} \Big( \prod_{i=2}^{k+1-h} F_{k,i} \prod_{i=1}^{k+1-h} G_{k,i}^{-1} \Big)^{4n-3} \cdot \frac{n-1}{(4n-3)F_{k,1}}\\
&= \frac{\ell_h}{(n-1)(4n-3)} [x_1^{\ell_1} \cdots x_k^{\ell_k}] \Big( \prod_{i=k+2-h}^{\lceil k/2 \rceil} F_{k,i} \prod_{i=k+2-h}^{\lfloor k/2 \rfloor} G_{k,i}^{-1} \Big)^{4n-4} \\
&\qquad F_{k,1}^{3n-2} \Big( \prod_{i=2}^{k+1-h} F_{k,i} \prod_{i=1}^{k+1-h} G_{k,i}^{-1} \Big)^{4n-3} \\
&\quad- \frac{1}{4n-3} [x_1^{\ell_1} \cdots x_h^{\ell_h-1} \cdots x_k^{\ell_k}] \Big( \prod_{i=k+2-h}^{\lceil k/2 \rceil} F_{k,i} \prod_{i=k+2-h}^{\lfloor k/2 \rfloor} G_{k,i}^{-1} \Big)^{4n-4} \\
&\qquad F_{k,1}^{3n-3} \Big( \prod_{i=2}^{k+1-h} F_{k,i} \prod_{i=1}^{k+1-h} G_{k,i}^{-1} \Big)^{4n-3}.
\end{align*}
Once again, we can now extract coefficients from both products following the order of variables $x_1,x_k,x_2,x_{k-1},\ldots$.

\medskip

Finally, we consider the generating function for all $k$-noncrossing trees, which is
$$N_1+N_2+ \cdots + N_k = \frac{x_1 z}{N_1} \Big(1 - \frac{x_1 z}{N_1}\Big) = \sqrt{\frac{z F_{k,1}(B)}{B}} - \frac{z F_{k,1}(B)}{B}$$
in view of~\eqref{eq:mainequation_nc} (for $r=1$) and~\eqref{eq:subst_nc}. So we can now apply the Lagrange-B\"urmann formula to the functions $f_1(t) = \sqrt{\frac{F_{k,1}(t)}{t}}$ (extracting the coefficient of $z^{n-1/2}$) and $f_2(t) = \frac{F_{k,1}(t)}{t}$ (extracting the coefficient of $z^{n-1}$), again with the same function $\Phi$ as before.
\end{proof}

As in the previous section, we can now derive a number of corollaries.

\begin{cor}\label{cor:total_nc}
For every integer $n > 1$, the total number of $k$-noncrossing trees with $n$ vertices is
$$\frac{1}{n-1} \binom{(2k+1)(n-1)}{n} - \frac{1}{2n-1} \binom{(2k+1)n-k-1}{n}.$$
The number of $k$-noncrossing trees with $n$ vertices whose root is labelled $h$ is
$$\frac{k+1-h}{2kn-k-h+1} \binom{(2k+1)n-k-h-1}{n-1}.$$
\end{cor}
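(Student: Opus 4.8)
The plan is to follow the same route as the proof of Corollary~\ref{cor:total}, specialising the generating functions of Section~\ref{sec:noncrossing} by setting $x_1 = x_2 = \cdots = x_k = 1$. With all variables equal to $1$, the auxiliary series collapse to
$$F_{k,i}(t) = \begin{cases} 1 & k \text{ even},\\ 1+t & k \text{ odd},\end{cases} \qquad G_{k,i}(t) = \begin{cases} 1-t & k \text{ even},\\ 1 & k \text{ odd},\end{cases}$$
for every $i$, exactly as in the plane-tree case. Substituting into Corollary~\ref{cor:identity_for_B_nc} then gives $B = z(1-B)^{-2k}$ when $k$ is even and $B = z(1+B)^{2k+1}$ when $k$ is odd, so that $\Phi(t) = (1-t)^{-2k}$ or $\Phi(t) = (1+t)^{2k+1}$ respectively. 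I would treat the even case in detail and remark that the odd case is entirely parallel.

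For the total count I would start from the identity
$$N_1 + N_2 + \cdots + N_k = \sqrt{\frac{zF_{k,1}(B)}{B}} - \frac{zF_{k,1}(B)}{B}$$
established at the end of the proof of Theorem~\ref{thm:main2}, and apply the Lagrange--B\"urmann formula~\eqref{eq:lag-bur} to the two summands separately: to $f_1(t) = \sqrt{F_{k,1}(t)/t}$ (extracting $[z^{n-1/2}]$, using the half-integer version already employed in Section~\ref{sec:noncrossing}) and to $f_2(t) = F_{k,1}(t)/t$ (extracting $[z^{n-1}]$). In the even case $f_1'(t) = -\tfrac12 t^{-3/2}$ and $f_2'(t) = -t^{-2}$, and after cancelling the powers of $t$ the two coefficient extractions reduce to $[t^n](1-t)^{-k(2n-1)}$ and $[t^n](1-t)^{-2k(n-1)}$; these are the binomial coefficients $\binom{(2k+1)n-k-1}{n}$ and $\binom{(2k+1)(n-1)}{n}$, with prefactors $\tfrac{1}{2n-1}$ and $\tfrac{1}{n-1}$, which assembles into the first claimed formula.

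For the root-label count I would use the second representation $N_h = x_h z \prod_{i=1}^{k+1-h} F_{k,i}(B) \prod_{i=1}^{k+1-h} G_{k,i}(B)^{-1}$ from Proposition~\ref{pro:formulas_for_Ni}, which is valid for \emph{all} $h$ and yields the single uniform expression $N_h = B(1-B)^{k+h-1}$ (even $k$), respectively $N_h = B(1+B)^{-k-h}$ (odd $k$), after eliminating $z$ via $z = B(1-B)^{2k}$ or $z = B(1+B)^{-(2k+1)}$. Applying~\eqref{eq:lag-bur} with $f(t) = t(1-t)^{k+h-1}$ gives
$$[z^n] N_h = \frac1n \Big( \binom{(2k+1)n-k-h}{n-1} - (k+h)\binom{(2k+1)n-k-h-1}{n-2} \Big),$$
and the analogous identity holds in the odd case. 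The main work is the final simplification: pulling out the common factor $\binom{(2k+1)n-k-h-1}{n-1}$ and the common denominator $2kn-k-h+1$, the bracketed numerator collapses to $(k+1-h)n$ in both parities, producing the stated $\frac{k+1-h}{2kn-k-h+1}\binom{(2k+1)n-k-h-1}{n-1}$. The only genuine obstacle is bookkeeping rather than ideas: keeping the half-integer Lagrange--B\"urmann application honest for the square-root term, and checking that the two parities really do lead to one uniform answer.
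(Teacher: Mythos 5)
Your proposal is correct, and for the total count it is essentially identical to the paper's proof: the same specialisation $x_1=\cdots=x_k=1$, the same collapsed equation $B = z(1-B)^{-2k}$ (even $k$) resp.\ $B=z(1+B)^{2k+1}$ (odd $k$), and the same two Lagrange--B\"urmann extractions applied to $f_1(t)=\sqrt{F_{k,1}(t)/t}$ at $[z^{n-1/2}]$ and $f_2(t)=F_{k,1}(t)/t$ at $[z^{n-1}]$, yielding $\frac{1}{n-1}\binom{(2k+1)(n-1)}{n}-\frac{1}{2n-1}\binom{(2k+1)n-k-1}{n}$.

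For the root-label count you deviate from the paper in a small but genuine way. The paper stays with the square-root representation from Proposition~\ref{pro:formulas_for_Ni}, specialising to $N_h=\sqrt{zB}\,(1-B)^{h-1}$ and extracting $[z^{n-1/2}]$, so the half-integer Lagrange--B\"urmann is used for this part as well. You instead take the second (integer-exponent) representation $N_h = z\prod_{i=1}^{k+1-h}F_{k,i}(B)\,G_{k,i}(B)^{-1}$, which is indeed valid for all $h\in[k]$ (the proof of Corollary~\ref{cor:identity_for_B_nc} uses exactly this fact), eliminate $z$ via $z=B(1-B)^{2k}$ to get $N_h=B(1-B)^{k+h-1}$, and apply the ordinary Lagrange--B\"urmann formula with $f(t)=t(1-t)^{k+h-1}$. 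The two representations are consistent --- substituting $z=B(1-B)^{2k}$ into $\sqrt{zB}\,(1-B)^{h-1}$ also gives $B(1-B)^{k+h-1}$ --- and your intermediate identity
$$[z^n]N_h=\frac1n\Big(\binom{(2k+1)n-k-h}{n-1}-(k+h)\binom{(2k+1)n-k-h-1}{n-2}\Big)$$
checks out, with the numerator collapsing to $(k+1-h)n$ over the common denominator $2kn-k-h+1$ exactly as you say; the odd case gives $f(t)=t(1+t)^{-k-h}$ and the slightly different pair $\binom{(2k+1)n-k-h-1}{n-1}-(k+h-1)\binom{(2k+1)n-k-h-1}{n-2}$, which simplifies to the same answer. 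What your variant buys is that the half-integer machinery is confined to the single term $\sqrt{z/B}$ in the total count, at the cost of one extra elimination step; what the paper's route buys is uniformity, handling $N_h$ and the total with the same $\Phi$ and coefficient conventions throughout. Both are complete and correct.
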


\begin{proof}
We follow the lines of the proof of Corollary~\ref{cor:total}. Setting $x_1=x_2=\cdots = x_k = 1$, recall that we have
$$F_{k,i}(t) = \begin{cases} 1 & k \text{ even,} \\ 1 + t & k \text{ odd,} \end{cases}\quad \text{and} \quad G_{k,i}(t) = \begin{cases} 1 - t & k \text{ even,} \\ 1 & k \text{ odd.} \end{cases}$$
We show the calculations in the case that $k$ is even (the other case is similar once again), where we obtain
$$N_h = \sqrt{zB} (1-B)^{h-1}$$
and
$$N_1 + N_2 + \cdots + N_k = \frac{z}{N_1}\Big(1 - \frac{z}{N_1}\Big) = \sqrt{\frac{z}{B}} - \frac{z}{B},$$
where $B$ satisfies the implicit equation
$$B = z (1-B)^{-2k}.$$
We apply the Lagrange-B\"urmann formula to find that
\begin{align*}
[z^n] N_h &= [z^{n-1/2}] \sqrt{B} (1-B)^{h-1} \\
&= \frac{1}{n-\frac12} [t^{n-3/2}] \frac{1}{2\sqrt{t}} (1-(2h-1)t) (1-t)^{h-2} (1-t)^{-2k(n-1/2)}\\
&= \frac{1}{2n-1} [t^{n-1}] (1-(2h-1)t) (1-t)^{-2kn+k+h-2} \\
&= \frac{1}{2n-1} \binom{(2k+1)n-k-h}{n-1} - \frac{2h-1}{2n-1} \binom{(2k+1)n-k-h-1}{n-2} \\
&= \frac{k+1-h}{2kn-k-h+1} \binom{(2k+1)n-k-h-1}{n-1}
\end{align*}
and similarly
\begin{align*}
[z^n] (N_1 + N_2 + \cdots + N_k) &= [z^{n-1/2}] B^{-1/2} - [z^{n-1}] B^{-1} \\
&= \frac{1}{n-\frac12} [t^{n-3/2}] \Big( - \frac12 t^{-3/2} \Big) (1-t)^{-2k(n-1/2)} \\
&\quad- \frac{1}{n-1} [t^{n-2}] (-t^{-2}) (1-t)^{-2k(n-1)} \\
&= \frac{1}{n-1} [t^n] (1-t)^{-2kn+2k} - \frac{1}{2n-1} [t^n] (1-t)^{-2kn+k} \\
&= \frac{1}{n-1} \binom{(2k+1)(n-1)}{n} - \frac{1}{2n-1} \binom{(2k+1)n-k-1}{n}.
\end{align*}
\end{proof}
Next, we count $k$-noncrossing trees by the number of occurrences of a single label.

\begin{cor}\label{cor:single_label_nc}
For every $n > 1$, the total number of $k$-noncrossing trees with $n$ vertices of which $\ell$ are labelled $h$ is equal to
\begin{align*}
\frac{1}{n-1} &\sum_{r=0}^{n-\ell} \binom{4(h-1)(n-1)+r-1}{r} \binom{3n-3-r}{\ell} \binom{2(k+1-2h)(n-1)}{n-r-\ell}\\&-\frac{1}{2n-1} \sum_{r=0}^{n-\ell} \binom{2(h-1)(2n-1)+r-1}{r} \binom{3n-2-r}{\ell} \binom{(k+1-2h)(2n-1)}{n-r-\ell}
\end{align*}
if $h \leq \lceil k/2 \rceil$, and equal to
\begin{align*}
\frac{1}{n-1} &\sum_{r=0}^{n-\ell} \binom{(4h-4h+3)(n-1)}{r} \binom{n+r+\ell-2}{\ell} \binom{(4h-2k-3)(n-1)-r-\ell}{n-r-\ell}\\&-\frac{1}{2n-1} \sum_{r=0}^{n-\ell} \binom{2(k+1-h)(2n-1)-n}{r} \binom{n+r+\ell-1}{\ell} \binom{(2h-k-2)(2n-1)+n-r-\ell}{n-r-\ell}
\end{align*}
otherwise.
\end{cor}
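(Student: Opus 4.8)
The plan is to mimic the proof of Corollary~\ref{cor:single_label}, replacing the total generating function for $k$-plane trees by the one for $k$-noncrossing trees. As in that proof, I would specialise $x_i = 1$ for every $i \neq h$, keeping only $x_h$ as a genuine variable, and carry out the computation in the representative case that $k$ is odd and $h \leq \lceil k/2\rceil = \frac{k+1}{2}$ (all remaining cases being entirely analogous). With this substitution one has $F_{k,i}(t) = 1 + x_h t$ for $i \leq h$ and $1 + t$ for $i > h$, while $G_{k,i}(t) = 1 + (x_h - 1)t$ for $i < h$ and $1$ for $i \geq h$. Feeding these into the function $\Phi$ from the proof of Theorem~\ref{thm:main2} collapses it to
\[
\Phi(t) = (1 + x_h t)^{4h-1}(1+t)^{2(k+1-2h)}(1 + (x_h-1)t)^{-4(h-1)},
\]
and in particular $F_{k,1}(t) = 1 + x_h t$.

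The key structural difference from the plane-tree case is that the total generating function has two terms,
\[
N_1 + N_2 + \cdots + N_k = \sqrt{\frac{z F_{k,1}(B)}{B}} - \frac{z F_{k,1}(B)}{B},
\]
and this is exactly the origin of the two sums in the stated formula. I would extract $[z^n x_h^\ell]$ from each term separately via the Lagrange-B\"urmann formula applied to $B = z\Phi(B)$. The linear term is handled with $f_2(t) = F_{k,1}(t)/t$, $f_2'(t) = -t^{-2}$, and contributes $+\frac{1}{n-1}[t^n]\Phi(t)^{n-1}$, producing the first (positive) sum. The square-root term requires the coefficient of $z^{n-1/2}$, so I apply Lagrange-B\"urmann with the half-integer exponent $n-\tfrac12$ (exactly as already done in the proof of Theorem~\ref{thm:main2}) to $f_1(t) = \sqrt{F_{k,1}(t)/t}$; logarithmic differentiation gives $f_1'(t) = -\tfrac12 t^{-3/2}(1+x_h t)^{-1/2}$, and the prefactor $\frac{1}{n-1/2} \cdot (-\tfrac12) = -\frac{1}{2n-1}$ produces precisely the $\frac{1}{2n-1}$ coefficient of the second (negative) sum.

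The final step is routine coefficient extraction, exactly as in Corollary~\ref{cor:single_label}. Writing $1 + (x_h - 1)t = (1 + x_h t)\bigl(1 - \tfrac{t}{1+x_h t}\bigr)$ lets me combine the powers of $1 + x_h t$ and expand $\bigl(1 - \tfrac{t}{1+x_h t}\bigr)^{-c}$ by the binomial series; this introduces the summation index $r$ together with the first binomial factor in each sum. Reading off the coefficient of $x_h^\ell$ from the surviving power of $1 + x_h t$ gives the middle factor, and the coefficient of $t^{n-r-\ell}$ in the power of $1+t$ gives the last factor. Tracking the exponents carefully—$3n-3$ and $2(k+1-2h)(n-1)$ for the $\Phi^{n-1}$ term, against $3n-2$ and $(k+1-2h)(2n-1)$ for the square-root term—yields the two sums verbatim. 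I expect the main obstacle to be purely bookkeeping: keeping the half-integer exponents consistent throughout the square-root computation, and, in the case $h > \lceil k/2\rceil$, dealing with the fact that $x_h$ then enters $F_{k,i}$ and $G_{k,i}$ with a negative coefficient, so that the relevant expansions become genuinely negative-binomial and produce the $\binom{n+r+\ell-2}{\ell}$-type factors appearing in the second formula.
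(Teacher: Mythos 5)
Your proposal is correct and follows essentially the same route as the paper's own proof: the same specialisation $x_i=1$ for $i\neq h$, the same two-term generating function $\sqrt{zF_{k,1}(B)/B}-zF_{k,1}(B)/B$ handled by separate Lagrange--B\"urmann applications to $f_1(t)=\sqrt{F_{k,1}(t)/t}$ and $f_2(t)=F_{k,1}(t)/t$, and the same rewriting $1+(x_h-1)t=(1+x_ht)\bigl(1-\tfrac{t}{1+x_ht}\bigr)$ followed by a negative-binomial expansion in the index $r$. All the prefactors and exponents you track ($-\tfrac{1}{2n-1}$ versus $+\tfrac{1}{n-1}$, and $3n-2$, $(k+1-2h)(2n-1)$ versus $3n-3$, $2(k+1-2h)(n-1)$) agree with the paper's computation, and your deferral of the case $h>\lceil k/2\rceil$ as analogous bookkeeping matches what the paper itself does.
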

\begin{proof}
Let us give the proof in the case that $k$ is odd and $h \leq \lceil k/2 \rceil = \frac{k+1}{2}$. The other cases are similar. Set all $x_i$ except $x_h$ equal to $1$. As noted in the proof of Corollary~\ref{cor:single_label}, we have
$$F_{k,i}(t) = \begin{cases} 1+x_ht & i \leq h, \\ 1+t & i > h,\end{cases} \qquad \text{and} \qquad G_{k,i}(t)  = \begin{cases} 1 + (x_h-1)t & i < h, \\ 1 & i \geq h. \end{cases}$$
Now we have to determine
\begin{equation}\label{eq:xh_extraction}
[z^n x_h^{\ell}] (N_1 + N_2 + \cdots + N_k)=[z^n x_h^{\ell}]\left(\sqrt{\frac{zF_{k,1}(B)}{B}}-\frac{zF_{k,1}(B)}{B}\right).
\end{equation}
We note that \[\frac{\partial}{\partial t}\frac{F_{k,1}(t)}{t}=-\frac{1}{t^2}\] and 
\[\frac{\partial}{\partial t}\sqrt{\frac{F_{k,1}(t)}{t}}=\frac{1}{2}\Big(\frac{F_{k,1}(t)}{t}\Big)^{-1/2}\frac{\partial}{\partial t}\frac{F_{k,1}(t)}{t}=-\frac{1}{2t^{3/2}}F_{k,1}(t)^{-1/2}.\]
Using the Lagrange-B\"urmann formula once again, we find that

\begin{align*}
[z^n x_h^{\ell}] &\sqrt{\frac{zF_{k,1}(B)}{B}} \\
&=[z^{n-1/2} x_h^{\ell}] \sqrt{\frac{F_{k,1}(B)}{B}} \\
&= \frac{1}{n-\frac12} [t^{n-3/2}x_h^{\ell}] \Big( -\frac{1}{2t^{3/2}}F_{k,1}(t)^{-1/2} \Big) \Big( F_{k,1}(t)^3 \prod_{i=2}^{\lceil k/2 \rceil} F_{k,i}(t)^4 \prod_{i=1}^{\lfloor k/2 \rfloor} G_{k,i}(t)^{-4} \Big)^{n-1/2} \\
&= -\frac{1}{2n-1} [t^n x_h^{\ell}] (1+x_h t)^{-1/2}(1+x_h t)^{3(n-1/2)}(1+x_h t)^{4(h-1)(n-1/2)} \\
&\qquad(1+t)^{2(k+1-2h)(n-1/2)} (1+(x_h-1)t)^{-4(h-1)(n-1/2)}\\
&=-\frac{1}{2n-1}[t^n x_h^{\ell}] (1+x_h t)^{3n-2}(1+x_h t)^{2(h-1)(2n-1)}(1+t)^{(k+1-2h)(2n-1)}\\
&\qquad(1+(x_h-1)t)^{-2(h-1)(2n-1)}.
\end{align*}
Now we extract the coefficient as follows:
\begin{align*}
[z^n x_h^{\ell}] &\sqrt{\frac{zF_{k,1}(B)}{B}}\\
&= -\frac{1}{2n-1} [t^n x_h^{\ell}] \Big( 1 - \frac{t}{1+x_h t}\Big)^{-2(h-1)(2n-1)} 
(1+x_h t)^{3n-2} (1+t)^{(k+1-2h)(2n-1)} \\
&= -\frac{1}{2n-1} [t^n x_h^{\ell}]  \sum_{r \geq 0} \binom{2(h-1)(2n-1)+r-1}{r} t^r (1+x_ht)^{3n-2-r} (1+t)^{(k+1-2h)(2n-1)} \\
&= -\frac{1}{2n-1} \sum_{r \geq 0} \binom{2(h-1)(2n-1)+r-1}{r} [t^{n-r} x_h^{\ell}] (1+x_ht)^{3n-2-r} (1+t)^{(k+1-2h)(2n-1)} \\
&= -\frac{1}{2n-1} \sum_{r \geq 0} \binom{2(h-1)(2n-1)+r-1}{r} \binom{3n-2-r}{\ell} [t^{n-r-\ell}] (1+t)^{(k+1-2h)(2n-1)} \\
&= -\frac{1}{2n-1} \sum_{r=0}^{n-\ell} \binom{2(h-1)(2n-1)+r-1}{r} \binom{3n-2-r}{\ell} \binom{(k+1-2h)(2n-1)}{n-r-\ell}.
\end{align*}
Similarly, we obtain $[z^n x_h^{\ell}] \frac{zF_{k,1}(B)}{B}.$ We have

\begin{align*}
[z^n x_h^{\ell}] &\frac{zF_{k,1}(B)}{B} =[z^{n-1} x_h^{\ell}] \frac{F_{k,1}(B)}{B} \\
&= \frac{1}{n-1} [t^{n-2} x_h^{\ell}] \Big( - \frac{1}{t^2} \Big) \Big( F_{k,1}(t)^3 \prod_{i=2}^{\lceil k/2 \rceil} F_{k,i}(t)^4 \prod_{i=1}^{\lfloor k/2 \rfloor} G_{k,i}(t)^{-4} \Big)^{n-1} \\
&=- \frac{1}{n-1}[t^n x_h^{\ell}] (1+x_h t)^{3n-3}(1+x_h t)^{4(h-1)(n-1)} \\
&\qquad(1+t)^{2(k+1-2h)(n-1)} (1+(x_h-1)t)^{-4(h-1)(n-1)}\\
&=- \frac{1}{n-1} [t^n x_h^{\ell}] \Big( 1 - \frac{t}{1+x_h t}\Big)^{-4(h-1)(n-1)} 
(1+x_h t)^{3n-3} (1+t)^{2(k+1-2h)(n-1)} \\
&=- \frac{1}{n-1} [t^n x_h^{\ell}]  \sum_{r \geq 0} \binom{4(h-1)(n-1)+r-1}{r} t^r (1+x_ht)^{3n-3-r} (1+t)^{2(k+1-2h)(n-1)} \\
&=- \frac{1}{n-1} \sum_{r \geq 0} \binom{4(h-1)(n-1)+r-1}{r} [t^{n-r} x_h^{\ell}] (1+x_ht)^{3n-3-r} (1+t)^{2(k+1-2h)(n-1)} \\
&=- \frac{1}{n-1} \sum_{r \geq 0} \binom{4(h-1)(n-1)+r-1}{r} \binom{3n-3-r}{\ell} [t^{n-r-\ell}] (1+t)^{2(k+1-2h)(n-1)} \\
&=- \frac{1}{n-1} \sum_{r=0}^{n-\ell} \binom{4(h-1)(n-1)+r-1}{r} \binom{3n-3-r}{\ell} \binom{2(k+1-2h)(n-1)}{n-r-\ell}.
\end{align*}
Now, we combine the two by means of~\eqref{eq:xh_extraction},
and the result follows.
\end{proof}
\begin{cor}\label{cor:average_nc}
For every $n > 1$, the average number of vertices labelled $h$ in $k$-noncrossing trees with $n$ vertices is
$$\frac{n}{(2k+1)n-(k+1)} \Big( 3n-2 - \frac{2(h-1)(n-1)}{k} + \frac{2(k+1-2h)}{(2k+1)\big(2 - \frac{(2kn+n-2k)^{\overline{k}}}{(2kn+1-2k)^{\overline{k}}}\big)} \Big),$$
where $m^{\overline{k}}=m(m+1)\cdots(m+k-1)$ is the rising factorial. Asymptotically, this is equal to $\frac{3k+2-2h}{k(2k+1)} n + \frac{k+1-2h}{(2 k + 1)^2( 2 (\frac{2k}{2k+1})^{k}-1)} + O(1/n)$.
\end{cor}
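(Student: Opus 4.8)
The plan is to compute the average as a quotient: the total number of vertices labelled $h$ summed over all $k$-noncrossing trees with $n$ vertices, divided by the total number of such trees. The denominator is already available from Corollary~\ref{cor:total_nc}, so the real work lies in the numerator. Following the proof of Corollary~\ref{cor:average}, I would obtain it by differentiating the generating function with respect to $x_h$ and setting all variables equal to $1$, rather than extracting a coefficient of $x_h^\ell$. Concretely, I reuse the single-label specialisation from the proof of Corollary~\ref{cor:single_label_nc} (say $k$ odd and $h\le\lceil k/2\rceil$, the remaining cases being analogous), where
$$N_1+\cdots+N_k=\sqrt{\frac{zF_{k,1}(B)}{B}}-\frac{zF_{k,1}(B)}{B},$$
so that the numerator is $[z^n]\frac{\partial}{\partial x_h}(N_1+\cdots+N_k)\big|_{x_h=1}$.

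For each of the two summands I would apply the Lagrange--B\"urmann formula~\eqref{eq:lag-bur} exactly as in Corollary~\ref{cor:single_label_nc}, reducing $[z^n x_h^\ell]$ of each summand to a coefficient extraction $[t^n]$ of a product of the form $(1+x_ht)^\alpha(1+t)^\beta(1+(x_h-1)t)^\gamma$. The key simplification is that applying $\frac{\partial}{\partial x_h}\big|_{x_h=1}$ (instead of $[x_h^\ell]$) collapses such a product to $(1+t)^{\alpha+\beta}$ times the linear factor $\alpha\frac{t}{1+t}+\gamma t$, since $1+(x_h-1)t\to1$ at $x_h=1$. Extracting $[t^n]$ then yields a combination of two binomial coefficients. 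Setting $E=(2k+1)n-k-1$ and $E'=(2k+1)(n-1)$ (these are precisely the exponents $\alpha+\beta$ arising from the two summands), the numerator becomes an explicit linear combination of $\binom{E-1}{n-1},\binom{E}{n-1},\binom{E'-1}{n-1},\binom{E'}{n-1}$ with the prefactors $\frac1{2n-1}$ and $\frac1{n-1}$ inherited from the two Lagrange--B\"urmann applications.

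To reach the stated closed form I would rewrite everything in terms of the two binomials $\binom{E}{n}$ and $\binom{E'}{n}$, using $\binom{E-1}{n-1}=\frac nE\binom{E}{n}$, $\binom{E}{n-1}=\frac{n}{E-n+1}\binom{E}{n}$ and their primed analogues. The decisive observation is that the denominator from Corollary~\ref{cor:total_nc} factors as
$$\frac{1}{n-1}\binom{E'}{n}-\frac{1}{2n-1}\binom{E}{n}=\frac{\binom{E'}{n}}{2(n-1)}\,(2-\rho),\qquad \rho=\frac{(2kn+n-2k)^{\overline{k}}}{(2kn+1-2k)^{\overline{k}}},$$
because $\binom{E}{n}\big/\binom{E'}{n}=\rho\cdot\frac{2n-1}{2(n-1)}$ telescopes into a ratio of rising factorials; this is the source of the factor $2-\rho$ in the corollary. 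Dividing the numerator by this expression and simplifying (after writing $\rho=2-(2-\rho)$ to separate the polynomial part from the $\frac1{2-\rho}$ part) gives the exact formula, with the prefactor $\frac{n}{(2k+1)n-(k+1)}=\frac nE$ emerging naturally.

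For the asymptotic statement I would expand $\rho\to\big(\frac{2k+1}{2k}\big)^k$ and the rational prefactor $\frac nE=\frac1{2k+1}\big(1+\frac{k+1}{(2k+1)n}+O(1/n^2)\big)$. The linear-in-$n$ part inside the bracket contributes $\frac{3k+2-2h}{k(2k+1)}n$ to leading order. The main obstacle is the $O(1)$ term: it receives a contribution both from the $O(1/n)$ correction of the prefactor multiplied by the leading linear term and from the genuinely constant terms inside the bracket (including the limit of the $\frac1{2-\rho}$ term, via $\frac{1}{2-\rho}\to\frac{(2k/(2k+1))^{k}}{2(2k/(2k+1))^{k}-1}$). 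Collecting these and verifying that the non-$\rho$ contributions combine to $-\frac{k+1-2h}{(2k+1)^2}$ yields the stated constant $\frac{k+1-2h}{(2k+1)^2\,(2(2k/(2k+1))^{k}-1)}$. This final bookkeeping, together with the routine but lengthy conversion of binomials in the exact step, is where most of the care is needed.
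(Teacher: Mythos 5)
Your proposal is correct and matches the paper's own proof in essence: both differentiate the specialised generating function $\sqrt{zF_{k,1}(B)/B}-zF_{k,1}(B)/B$ with respect to $x_h$ at $x_h=1$, apply the Lagrange--B\"urmann formula with the same $\Phi$, extract $[t^n]$ to get a four-binomial expression, and divide by the count from Corollary~\ref{cor:total_nc}. In fact, you supply exactly the simplifications the paper compresses into ``after a number of simplifications'' --- the telescoping identity $\binom{E}{n}/\binom{E'}{n}=\rho\,\tfrac{2n-1}{2(n-1)}$ producing the $2-\rho$ factor, and the asymptotic bookkeeping giving $-\tfrac{k+1-2h}{(2k+1)^2}$ plus the $\rho$-limit term --- and these check out.
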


\begin{proof}
We only consider the case that $k$ is odd and $h \leq \lceil k/2 \rceil = \frac{k+1}{2}$, as in the previous proof. As in the proof of Corollary~\ref{cor:average}, instead of extracting coefficients, we take the derivative with respect to $x_h$ and plug in $x_h=1$ in order to determine the total number of vertices labelled $h$ in all $k$-noncrossing trees. All other variables $x_i$ are immediately taken to be $1$. This results in
\begin{align}\label{cor_single_nc_proof1}
\nonumber &[z^n] \frac{\partial}{\partial x_h} (N_1+N_2+\cdots + N_k) \Big|_{x_1=\cdots=x_k = 1} \\
\nonumber&= \frac{1}{n-1} [t^n] \frac{\partial}{\partial x_h}(1+x_h t)^{3n-3+4(h-1)(n-1)} (1+t)^{2(k+1-2h)(n-1)} (1+(x_h-1)t)^{-4(h-1)(n-1)} \Big|_{x_h=1}\\
\nonumber&\quad -\frac{1}{2n-1} [t^n] \frac{\partial}{\partial x_h}(1+x_h t)^{3n-2+2(h-1)(2n-1)} (1+t)^{(k+1-2h)(2n-1)} (1+(x_h-1)t)^{-2(h-1)(2n-1)} \Big|_{x_h=1} \\
\nonumber&= [t^n] t\big(3-4(h-1)t \big) (1+t)^{(2k+1)(n-1)-1}-[t^n]t\left(\frac{3n-2}{2n-1}-2(h-1)t \right) (1+t)^{(2k+1)n-k-2} \\
\nonumber&= 3 \binom{(2k+1)(n-1)-1}{n-1} - 4(h-1) \binom{(2k+1)(n-1)-1}{n-2}- \frac{3n-2}{2n-1} \binom{(2k+1)n-k-2}{n-1}\\
\nonumber&\quad + 2(h-1) \binom{(2k+1)n-k-2}{n-2}\\
\nonumber&=2(3k-2h+2)\binom{(2k+1)n-2k-2}{n-2}-\frac{(3nk-2k-2(h-1)(n-1))}{n-1}\binom{(2k+1)n-k-2}{n-2}.
\end{align}
Dividing by the total number of $k$-noncrossing trees, we obtain the stated formula after a number of simplifications.
\end{proof}

As noted for $k$-plane trees, it is also possible to derive formulas for the variance of the number of vertices labelled $h$, as well as covariances of two different label counts. Moreover, one could also take the root label into account in Corollary~\ref{cor:single_label_nc} and Corollary~\ref{cor:average_nc}. However, the formulas for $k$-noncrossing trees are even more complicated than for $k$-plane trees.  Instead of stating the most general result (which would be rather lengthy), we only present the special case $k=2$.

\begin{cor}
Let $n > 1$. Variances and covariances of the number of vertices labelled $1,2$ respectively in $2$-noncrossing trees with $n$ vertices are given in the following table:
\begin{center}
\begin{tabular}{c|cc}
& $1$ & $2$  \\
\hline
$1$ & $\frac{3 (2 n-1) (4 n-3)(49 n^2-100 n+44)}{25(5n-6)(7n-5)^2}$ & $-\frac{3 (2 n-1) (4 n-3)(49 n^2-100 n+44)}{25(5n-6)(7n-5)^2}$ \\
$2$ & $-\frac{3 (2 n-1) (4 n-3)(49 n^2-100 n+44)}{25(5n-6)(7n-5)^2}$ & $\frac{3 (2 n-1) (4 n-3)(49 n^2-100 n+44)}{25(5n-6)(7n-5)^2}$  \\
\end{tabular}
\end{center}
\end{cor}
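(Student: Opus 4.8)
The key observation is that the total number of vertices is prescribed: since $\ell_1 + \ell_2 = n$, we have $\ell_2 = n - \ell_1$ for \emph{every} $2$-noncrossing tree with $n$ vertices. The two label counts are therefore affinely dependent, which forces
$$\operatorname{Var}(\ell_2) = \operatorname{Var}(n-\ell_1) = \operatorname{Var}(\ell_1) \qquad\text{and}\qquad \operatorname{Cov}(\ell_1,\ell_2) = \operatorname{Cov}(\ell_1,n-\ell_1) = -\operatorname{Var}(\ell_1).$$
Thus all four entries of the table are determined by the single number $\operatorname{Var}(\ell_1)$: the diagonal entries equal it and the off-diagonal entries are its negative, which is exactly the pattern displayed. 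It therefore suffices to compute $\operatorname{Var}(\ell_1)$.

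To do so I would follow the strategy of the proof of Corollary~\ref{variance_plane}, transplanted to the noncrossing setting via Theorem~\ref{thm:main2}. I set $x_2 = 1$ and retain $x_1$ as a marker for label-$1$ vertices. For $k=2$ one has $\lceil k/2\rceil = \lfloor k/2\rfloor = 1$, so that $F_{2,1}(t) = 1 + (x_1-1)t$ and $G_{2,1}(t) = 1 - t$, and Corollary~\ref{cor:identity_for_B_nc} gives $B = z\Phi(B)$ with
$$\Phi(t) = F_{2,1}(t)^3 G_{2,1}(t)^{-4} = \bigl(1+(x_1-1)t\bigr)^3 (1-t)^{-4}.$$
The total generating function, as established in the proof of Theorem~\ref{thm:main2}, is $N_1 + N_2 = \sqrt{zF_{2,1}(B)/B} - zF_{2,1}(B)/B$, to which I apply the Lagrange--B\"urmann formula~\eqref{eq:lag-bur} separately to $f_1(t) = \sqrt{F_{2,1}(t)/t}$ (extracting the coefficient of $z^{n-1/2}$) and $f_2(t) = F_{2,1}(t)/t$ (extracting the coefficient of $z^{n-1}$).

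The moments are then obtained by differentiating with respect to $x_1$ before extracting coefficients and afterwards setting $x_1 = 1$, exactly as in the proofs of Corollary~\ref{cor:average} and Corollary~\ref{variance_plane}. Concretely, I compute the first factorial moment from $[z^n]\frac{\partial}{\partial x_1}(N_1+N_2)\big|_{x_1=1}$ and the second factorial moment from $[z^n]\frac{\partial^2}{\partial x_1^2}(N_1+N_2)\big|_{x_1=1}$. At $x_1 = 1$ the factor $1+(x_1-1)t$ collapses to $1$, so each surviving coefficient extraction reduces, after the differentiations are carried out, to a short linear combination of binomial coefficients of the shape $\binom{5n-c}{\,n-d\,}$, just as in Corollary~\ref{cor:total_nc}. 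Dividing these by the total count from Corollary~\ref{cor:total_nc} yields $\mathbb{E}[\ell_1]$ and $\mathbb{E}[\ell_1(\ell_1-1)]$, and finally
$$\operatorname{Var}(\ell_1) = \mathbb{E}[\ell_1(\ell_1-1)] + \mathbb{E}[\ell_1] - \mathbb{E}[\ell_1]^2.$$

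I expect the genuine difficulty to lie entirely in the closing simplification rather than in any conceptual step. Each ingredient is a ratio of products of binomial coefficients, and assembling the variance requires reducing the quotients $\binom{5n-c}{n-d}\big/\binom{5n-c'}{n-d'}$ to rational functions of $n$ via rising/falling-factorial cancellation and then combining the two Lagrange--B\"urmann contributions over a common denominator. This is a lengthy but purely mechanical rational-function computation, best executed with a computer algebra system; it should reproduce the stated closed form $\frac{3(2n-1)(4n-3)(49n^2-100n+44)}{25(5n-6)(7n-5)^2}$, and the specific linear factors appearing in the denominator provide a convenient internal consistency check against the denominators already seen in Corollary~\ref{cor:total_nc}.
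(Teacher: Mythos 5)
Your proposal is correct and follows essentially the same route as the paper: the paper likewise reduces everything to $\operatorname{Var}(\ell_1)$ via the deterministic relation $\ell_1+\ell_2=n$, applies the Lagrange--B\"urmann formula to the two parts $\sqrt{zF_{2,1}(B)/B}$ and $zF_{2,1}(B)/B$ of $N_1+N_2$ with $\Phi(t)=(1+(x_1-x_2)t)^3(1-x_2t)^{-4}$, and obtains the factorial moments by differentiating in $x_1$ before setting $x_1=x_2=1$, yielding the same short combinations of binomial coefficients $\binom{5n-c}{n-d}$ followed by routine rational simplification.
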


\begin{proof}
We recall from the proof of Theorem~\ref{thm:main2} that
$$[z^nx_1^{\ell_1}x_2^{\ell_2}] (N_1+N_2) = \frac{1}{n-1} [t^nx_1^{\ell_1}x_2^{\ell_2}] \Phi(t)^{n-1}-\frac{1}{2n-1} [t^nx_1^{\ell_1}x_2^{\ell_2}] F_{2,1}(t)^{-1/2}\Phi(t)^{n-1/2},$$
where 
$$F_{2,1}(t) = 1+(x_1-x_2)t$$ and 
$$\Phi(t) = (1+(x_1-x_2)t)^3(1-x_2t)^{-4}$$
in the special case $k=2$. Again as in Corollary \ref{variance_plane}, for us to compute the variance of the number of vertices labelled $h$, we need to first compute the second moment, which is
\begin{equation}\label{eq:second_moment_nc}
\frac{[z^n] \frac{\partial^2}{\partial x_h^2} (N_1+N_2) |_{x_1=x_2= 1} + [z^n] \frac{\partial}{\partial x_h} (N_1+N_2) |_{x_1=x_2= 1}}{[z^n] (N_1+N_2) |_{x_1=x_2 = 1}},
\end{equation}
and then subtract the square of the mean. Likewise, the mixed moment of the number of vertices labelled $h$ and the number of vertices labelled $i$ is
$$\frac{[z^n] \frac{\partial^2}{\partial x_h \partial x_i} (N_1+N_2) |_{x_1=x_2= 1}}{[z^n] (N_1+N_2) |_{x_1=x_2 = 1}},$$
from which we subtract the product of the means to obtain the covariance.

\medskip

Again, we only show the calculations for the variance of the number of vertices labelled $1$ explicitly. The other entries follow automatically in this case, since the sum of the number of vertices labelled $1$ and the number of vertices labelled $2$ is deterministically equal to $n$. We get
\begin{align*}
[z^n] &\frac{\partial^2}{\partial x_1^2} (N_1+N_2) |_{x_1=x_2 = 1}\\
 &= \frac{1}{n-1} [t^n] 3(n-1)(3n-4)t^2 (1-t)^{-4(n-1)}- \frac{1}{2n-1} [t^n] (3n-2)(3n-3)t^2 (1-t)^{-(4n-2)}\\
&= 3(3n-4) [t^{n-2}](1-t)^{-4(n-1)}- \frac{(3n-2)(3n-3)}{2n-1} [t^{n-2}] (1-t)^{-(4n-2)}\\
&= 3(3n-4) \binom{5n-7}{n-2}-\frac{(3n-2)(3n-3)}{2n-1} \binom{5n-5}{n-2}.
\end{align*}
We already found earlier that
$$[z^n] \frac{\partial}{\partial x_1} (N_1+N_2) |_{x_1=x_2 = 1} = 3 \binom{5n-6}{n-1}-\frac{3n-2}{2n-1}\binom{5n-4}{n-1}$$
and
$$[z^n](N_1+N_2) |_{x_1=x_2 = 1} = \frac{1}{n-1} \binom{5n-5}{n}-\frac{1}{2n-1} \binom{5n-3}{n}.$$
Plugging everything into~\eqref{eq:second_moment_nc} and simplifying, we find a formula for the second moment and thus in turn for the variance.
\end{proof}

\begin{cor}
Let $n > 1$. The average number of vertices labelled $1,2$ respectively in $2$-noncrossing trees with $n$ vertices whose root is labelled $1,2$ respectively is given in the following table:
\begin{center}
\begin{tabular}{c|cc}
& $1$ & $2$  \\
\hline
root label $1$ & $\frac{3 n^{2} -n - 1}{5n-4}$ & $\frac{2n^2-3n+1}{5n-4}$  \\
root label $2$ & $\frac{3n-1}{5}$  & $\frac{2 n+1}{5}$  
\end{tabular}
\end{center}
\end{cor}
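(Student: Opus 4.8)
The plan is to specialize the machinery developed for the proof of Theorem~\ref{thm:main2} to $k=2$ and then to follow the template of Corollary~\ref{cor:average}, except that I keep track of the root label rather than summing over all of them. For $k=2$ we have $F_{2,1}(t)=1+(x_1-x_2)t$ and $G_{2,1}(t)=1-x_2t$, so Proposition~\ref{pro:formulas_for_Ni} and Corollary~\ref{cor:identity_for_B_nc} give
\[
N_1 = x_1\sqrt{\frac{zB}{F_{2,1}(B)}}, \qquad N_2 = x_2 z\,\frac{F_{2,1}(B)}{G_{2,1}(B)}, \qquad B = z\,\Phi(B),
\]
with $\Phi(t)=(1+(x_1-x_2)t)^3(1-x_2t)^{-4}$. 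Since a $2$-noncrossing tree carries only labels $1$ and $2$, the two label counts always sum to $n$; hence within each root class the two averages in a row sum to $n$, which halves the work (one average per root label suffices) and serves as a built-in consistency check.

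The core step is to express, for each root label $r$, the bivariate polynomial $[z^n]N_r$ as an explicit coefficient extraction in $x_1,x_2$ via the Lagrange--B\"urmann formula~\eqref{eq:lag-bur} applied to $B=z\Phi(B)$. For $N_2$ this is routine: writing $[z^n]N_2=[z^{n-1}]\,x_2F_{2,1}(B)G_{2,1}(B)^{-1}$ and differentiating the quotient, a convenient cancellation leaves
\[
[z^n]N_2 = \frac{x_1x_2}{n-1}\,[t^{n-2}]\,(1+(x_1-x_2)t)^{3n-3}(1-x_2t)^{-4n+2}.
\]
For $N_1$ one applies Lagrange--B\"urmann with a half-integer exponent, exactly as in the proof of Theorem~\ref{thm:main2}: taking $f(t)=x_1\sqrt{t/F_{2,1}(t)}$ and extracting the coefficient of $z^{n-1/2}$, the simplification $f'(t)=\tfrac{x_1}{2}t^{-1/2}(1+(x_1-x_2)t)^{-3/2}$ makes the half-integer powers combine into an honest power series, yielding
\[
[z^n]N_1 = \frac{x_1}{2n-1}\,[t^{n-1}]\,(1+(x_1-x_2)t)^{3n-3}(1-x_2t)^{-4n+2}.
\]
Putting $x_1=x_2=1$ recovers the root-class totals $\frac{1}{2n-1}\binom{5n-4}{n-1}$ and $\frac{1}{4n-3}\binom{5n-5}{n-1}$ of Corollary~\ref{cor:total_nc}, confirming these two expressions.

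To pass to averages I would differentiate the displayed expressions with respect to $x_1$ (for the count of label $1$) and set $x_1=x_2=1$; by the product rule each resulting term is a single coefficient of a power of $(1-t)$, hence a single binomial coefficient, so the total number of label-$1$ vertices in all trees of a given root class is a sum of just two binomials. Dividing by the corresponding root-class total and simplifying the ratio of binomial coefficients (which collapses because consecutive binomials in these families differ only by elementary rational factors) produces $\frac{3n^2-n-1}{5n-4}$ for root label $1$ and $\frac{3n-1}{5}$ for root label $2$; the complementary entries $\frac{2n^2-3n+1}{5n-4}$ and $\frac{2n+1}{5}$ then follow immediately from the label-sum identity. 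The only genuine subtlety is the half-integer Lagrange--B\"urmann step for $N_1$ together with the careful bookkeeping of exponents it entails; everything beyond that is elementary binomial manipulation, and the row-sum constraint both reduces the labor and guards against arithmetic slips.
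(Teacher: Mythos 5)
Your proposal is correct and takes essentially the same route as the paper: specialise Proposition~\ref{pro:formulas_for_Ni} and Corollary~\ref{cor:identity_for_B_nc} to $k=2$, apply the Lagrange--B\"urmann formula, differentiate with respect to $x_1$ at $x_1=x_2=1$, and divide by the root-class totals from Corollary~\ref{cor:total_nc}. The only deviation --- harmless and in fact tidier --- is that you keep the representation $N_1=x_1\sqrt{zB/F_{2,1}(B)}$ and extract $[z^{n-1/2}]$ with the half-integer exponent trick (exactly as in the paper's proof of Theorem~\ref{thm:main2}), whereas the paper's own corollary proof first eliminates $z$ to write $N_1,N_2$ as rational functions of $B$ and works at integer exponent; your intermediate identities, e.g.\ $[z^n]N_1=\tfrac{x_1}{2n-1}[t^{n-1}](1+(x_1-x_2)t)^{3n-3}(1-x_2t)^{2-4n}$ and $[z^n]N_2=\tfrac{x_1x_2}{n-1}[t^{n-2}](1+(x_1-x_2)t)^{3n-3}(1-x_2t)^{2-4n}$, check out and lead to the tabulated averages with only two binomials per entry (the complementary entries then following from the constraint that the two label counts sum to $n$).
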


\begin{proof}
We recall from the proof of Theorem~\ref{thm:main2} that
$$N_1 = \frac{x_1 B(1-x_2B)^2}{(1 + (x_1-x_2)B)^2} ,\ N_2 = \frac{x_2 B(1-x_2B)^3}{(1 + (x_1-x_2)B)^2},$$
with
$$B = z (1+(x_1-x_2)B)^3(1-x_2B)^{-4}.$$
In order to determine the desired mean values, we need the coefficients of the partial derivatives $\frac{\partial}{\partial x_h} N_i |_{x_1=x_2 = 1}$. 
We will show the details of the calculations in one of the cases again: the number of vertices labelled $1$ in $2$-noncrossing trees whose root label is $1$.
Since
$$\frac{\partial}{ \partial t} \frac{x_1 t(1-x_2t)^2}{(1 + (x_1-x_2)t)^2} = \frac{x_1 (1-x_2t) (1-x_1t-2x_2t-x_1x_2 t^2+x_2^2 t^2)}{(1 + (x_1-x_2)t)^3},$$
we have
$$[z^n] N_1 = \frac{1}{n} [t^{n-1}] \frac{x_1 (1-x_2t) (1-x_1t-2x_2t-x_1x_2 t^2+x_2^2 t^2)}{(1 + (x_1-x_2)t)^3} (1+(x_1-x_2)t)^{3n}(1-x_2t)^{-4n}.$$
Now differentiate with respect to $x_1$ and set $x_1 = x_2 = 1$:
\begin{align*}
\frac{\partial}{\partial x_1} [z^n] N_1 \Big|_{x_1 = x_2= 1} &= \frac{1}{n} [t^{n-1}] \left(1+(3n-7)t-(9n-8)t^2\right) (1-t)^{1-4n} \\
&= \frac{1}{n} \left[ \binom{5n-3}{n-1} +(3n-7) \binom{5n-4}{n-2}-(9n-8)\binom{5n-5}{n-3}\right] \\
&= \frac{2(3n^2-n-1)}{(n-1)(n-2)} \binom{5n-5}{n-3}.
\end{align*}
Dividing by the total number of $2$-noncrossing trees with $n$ vertices and root label $1$, which is $\frac{1}{2n-1} \binom{5n-4}{n-1}$, we obtain the mean number of vertices labelled $1$.
 \end{proof}

%
%
%

\end{document}